\begin{document}

\newtheorem{claim}{Claim}
\newtheorem{lemma}{Lemma}
\newtheorem{theorem}{Theorem}
\newtheorem{proposition}{Proposition}
\newtheorem{corollary}{Corollary}
\theoremstyle{definition}
\newtheorem{definition}{Definition}
\newtheorem{example}{Example}
\newtheorem{remark}{Remark}

\setlength{\baselineskip}{16pt}
\newcommand{\dmu}{{\mathrm{d}\mu}}
\newcommand{\dnu}{{\mathrm{d}\nu}}
\newcommand{\dx}{{\mathrm{d}x}}
\newcommand{\dbmx}{{\mathrm{d}\bm{x}}}
\newcommand{\aka}[1]{\textcolor{red}{\bf #1}}

\newcommand{\indep}{\mathop{\bot\!\!\!\bot}}

\title{Inconsistency of diagonal scaling under high-dimensional limit: a replica approach}
\author{Tomonari Sei\footnote{Graduate School of Information Science and Technology, The University of Tokyo.}}
\date{August 17, 2018}
\maketitle

\begin{abstract}
In this note, we claim that diagonal scaling
of a sample covariance matrix is asymptotically inconsistent
if the ratio of the dimension to the sample size converges to a positive constant,
where population is assumed to be Gaussian
with a spike covariance model.
Our non-rigorous proof relies on the replica method developed in statistical physics.
In contrast to similar results known in literature on principal component analysis,
the strong inconsistency is not observed.
Numerical experiments support the derived formulas.
\end{abstract}

\section{Main results}

Let $\bm{x}_{(1)},\ldots,\bm{x}_{(n)}$ be
independent and identically distributed
according to the $p$-dimensional Gaussian distribution with mean vector $\bm{0}$
and covariance matrix $\bm\Sigma\in\mathbb{R}^{p\times p}$.
Denote the (uncentered) sample covariance matrix by
$\bm{S}=(1/n)\sum_{t=1}^n\bm{x}_{(t)}\bm{x}_{(t)}^\top$.
We assume $n\geq p$,
which implies that $\bm{S}$ is positive definite with probability one,
unless otherwise stated.

Let $\mathbb{R}_+$ be the set of positive numbers.
By a diagonal scaling theorem established by \cite{MarshallOlkin1968}, 
there exists a unique vector $\hat{\bm{w}}\in\mathbb{R}_+^p$ such that
\begin{align}
 \hat{\bm{W}}\bm{S}\hat{\bm{W}}\bm{1} = \bm{1},
 \label{eq:diagonal-scaling}
\end{align}
where $\hat{\bm{W}}={\rm diag}(\hat{\bm{w}})$ and $\bm{1}=\bm{1}_p=(1,\ldots,1)^\top$.
In other words, all row sums of
the scaled matrix $\hat{\bm{W}}\bm{S}\hat{\bm{W}}$ are unity.
Refer to \cite{Sei2016} for an application of this fact to a rating method
of multivariate quantitative data.

Let $\bm{w}_0$ be the population counterpart of $\hat{\bm{w}}$,
which means
$\bm{W}_0\bm\Sigma\bm{W}_0\bm{1} = \bm{1}$, $\bm{W}_0={\rm diag}(\bm{w}_0)$.
If $p$ is fixed and $n\to\infty$,
a standard argument of asymptotic statistics shows that
$\hat{\bm{w}}$ converges almost surely to the true parameter $\bm{w}_0$
because $\bm{S}$ converges to $\bm\Sigma$.
However, if $p$ is getting large as well as $n$,
then the limiting behavior of $\hat{\bm{w}}$ is not obvious.
We are interested in the behavior of $\hat{\bm{w}}$
if $\alpha_p:=n/p$ converges to some $\alpha\in [1,\infty)$ as $p\to\infty$.

In principal component analysis,
this type of high-dimensional asymptotics is deeply investigated.
In particular, the angle between the first eigenvectors of $\bm{S}$ and $\bm\Sigma$
converges to a non-zero value.
Furthermore, the limit becomes $\pi/2$ if $\alpha$ is less than a threshold.
We call these phenomena inconsistency and strong inconsistency, respectively.
The fact is found by \cite{BiehlMietzner1993,BiehlMietzner1994} in literature of statistical physics and then mathematically proved by \cite{JohnstoneLu2004,Nadler2008,Paul2007}.

We obtain similar conclusions for the diagonal-scaling problem, at least numerically, as follows.
First consider the simplest case $\bm\Sigma=\bm{I}=\bm{I}_p$,
the identity matrix of order $p$.
It is easy to see that $\bm{w}_0=\bm{1}$ for this case.
The following claim is derived in Section~\ref{section:formal-proof}
with the help of the replica method in statistical physics.

\begin{claim} \label{claim:Omega-zero}
 Let $\bm\Sigma=\bm{I}$.
 Suppose that $\alpha_p=n/p$ converges to some $\alpha\in[1,\infty)$ as $p\to\infty$.
 Then we have
 \begin{align}
  \lim_{p\to\infty} \frac{\hat{\bm{w}}^\top\bm{w}_0}{\|\hat{\bm{w}}\|\|\bm{w}_0\|}
  = \frac{1-\frac{3}{8\alpha}}{\sqrt{1-\frac{1}{2\alpha}}}.
  \label{eq:result-Omega-zero}
 \end{align}
The right hand side falls within $(5\sqrt{2}/8,1)$.
\end{claim}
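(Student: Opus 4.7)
The starting point is to recognize that the diagonal-scaling equation $\hat{\bm{W}}\bm{S}\hat{\bm{W}}\bm{1}=\bm{1}$ is the first-order optimality condition for the strictly convex objective
\begin{align*}
g(\bm{w}) \;=\; \tfrac{1}{2}\,\bm{w}^\top\bm{S}\bm{w} \;-\; \sum_{i=1}^{p}\log w_i,\qquad \bm{w}\in\mathbb{R}_+^{p},
\end{align*}
whose unique minimizer is $\hat{\bm{w}}$. I attach to $g$ the Gibbs measure $P_\beta(\bm{w})\propto e^{-\beta g(\bm{w})}$ on $\mathbb{R}_+^{p}$, with partition function $Z_\beta(\bm{S})$. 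As $\beta\to\infty$ this measure concentrates on $\hat{\bm{w}}$, so expectations of $\|\bm{w}\|^2/p$ and $\bm{1}^\top\bm{w}/p$ under $P_\beta$ converge to $q:=\lim\|\hat{\bm{w}}\|^2/p$ and $m:=\lim\bm{1}^\top\hat{\bm{w}}/p$, and the cosine similarity in \eqref{eq:result-Omega-zero} equals $m/\sqrt{q}$. My task is thus reduced to computing $m(\alpha)$ and $q(\alpha)$.

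Next I apply the replica trick $\mathbb{E}_{\bm{S}}\log Z_\beta=\lim_{r\to 0^{+}}(\mathbb{E}_{\bm{S}}Z_\beta^{r}-1)/r$. For integer $r$, writing $\bm{S}=n^{-1}\sum_{t}\bm{x}_{(t)}\bm{x}_{(t)}^{\top}$ with $\bm{x}_{(t)}\sim N(\bm{0},\bm{I}_p)$ and integrating out the $n$ i.i.d.\ samples produces the factor $[\det(\bm{I}_r+\tfrac{\beta}{n}Q)]^{-n/2}$, where $Q_{ab}=\bm{w}^{a}\!\cdot\!\bm{w}^{b}$ is the replica overlap matrix. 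I then fix $q_{ab}:=Q_{ab}/p$ and $m_a:=\bm{1}^\top\bm{w}^{a}/p$ by delta functions with conjugate variables, and saddle-point as $p\to\infty$. Adopting the replica-symmetric ansatz $q_{aa}=q_1$, $q_{ab}=q_0$ for $a\neq b$, $m_a=m$, and absorbing the off-diagonal coupling by a Hubbard-Stratonovich transform, the per-site integral becomes a one-dimensional Laplace-type integral on $\mathbb{R}_+$ controlled by the log-barrier $-\log w$ and the effective random field. Sending $r\to 0$ and then $\beta\to\infty$ with $\chi:=\beta(q_1-q_0)$ held finite produces a closed algebraic system in $(m,q,\chi)$ and their conjugates, parametrized by $\alpha$.

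Solving this system is expected to yield explicit rational expressions $m(\alpha)$ and $q(\alpha)$; the claim reduces to the identity $m/\sqrt{q}=(1-\tfrac{3}{8\alpha})/\sqrt{1-\tfrac{1}{2\alpha}}$, after which the inclusion in $(5\sqrt{2}/8,1)$ follows by elementary monotonicity of the right-hand side on $[1,\infty)$, with the lower endpoint attained at $\alpha=1$ and the limit $1$ reached as $\alpha\to\infty$. The main obstacle will be the one-site integral in the RS saddle point: the $-\log w$ barrier forbids a purely Gaussian reduction, so the $\beta\to\infty$ Laplace expansion must be carried out carefully, separating the leading $O(\beta)$ piece (which fixes $m$ and $q$ through the location of the per-site minimum) from the $O(1)$ piece (which fixes $\chi$ through the curvature). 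A secondary obstacle is the legitimacy of the replica-symmetric ansatz itself; because $g$ is strictly convex in $\bm{w}$ for every realization of $\bm{S}$, RS is physically natural, and I will argue for it by verifying the replicon stability condition and by comparing the resulting prediction with the numerical experiments mentioned in the abstract, rather than by a rigorous argument.
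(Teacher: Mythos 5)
Your plan follows essentially the same route as the paper: the same log-barrier Hamiltonian and Gibbs measure concentrating on $\hat{\bm{w}}$, the replica trick with the Gaussian samples integrated out to give the $\det(\bm{I}_r+\tfrac{\beta}{n}\bm{Q})^{-n/2}$ factor, delta-function enforcement of the overlaps with conjugate variables and a saddle point, the replica-symmetric ansatz, and the limits $r\to 0$ then $\beta\to\infty$ with $\chi=\beta(Q-q)$ held finite. The one substantive omission is that you leave the decisive step as ``expected'': the paper actually solves the resulting saddle-point system in closed form (after rescaling $Q=w^2\nu$, $m=w\mu$, $\chi=w^2\eta$, the $\Omega=0$ stationarity equations give $\eta=\tfrac12$, $w^2=\nu=(1-\tfrac{1}{2\alpha})^{-1}$, $\mu=(1-\tfrac{3}{8\alpha})/(1-\tfrac{1}{2\alpha})$), and that explicit solution is where the constants $\tfrac{3}{8\alpha}$ and $\tfrac{1}{2\alpha}$ come from.
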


The quantity $\hat{\bm{w}}^\top\bm{w}_0/(\|\hat{\bm{w}}\|\|\bm{w}_0\|)$ is
the cosine of the angle between $\hat{\bm{w}}$ and $\bm{w}_0$,
referred to as the cosine similarity.
It follows from (\ref{eq:result-Omega-zero}) that the cosine similarity
does not converge to $1$ and hence $\hat{\bm{w}}$ is inconsistent.
In contrast to principal component analysis, $\hat{\bm{w}}$ is never strongly inconsistent.
This is not a direct consequence of positivity of $\hat{\bm{w}}$ and $\bm{w}_0$.
For example, the angle between two positive vectors
$(p,1,\cdots,1)$ and $(1,\ldots,1)$ in $\mathbb{R}^p$ converges to $\pi/2$ as $p\to\infty$.

Next consider a spike covariance model given by
\begin{align}
  \bm\Sigma = \Omega\frac{\bm{1}\bm{1}^\top}{p} + \bm{I},
  \label{eq:spike}
\end{align}
where $\Omega$ is a positive constant meaning the signal-to-noise ratio.
It is easy to see that $\bm{w}_0=(\Omega+1)^{-1/2}\bm{1}$.
The following claim is also derived in Section~\ref{section:formal-proof}.

\begin{claim} \label{claim:Omega-positive}
 Assume the spike covariance model (\ref{eq:spike}) with $\Omega>0$.
 Suppose that $\alpha_p=n/p$ converges to some $\alpha\in[1,\infty)$ as $p\to\infty$.
 Then we have
 \begin{align}
  \lim_{p\to\infty} \frac{\hat{\bm{w}}^\top\bm{w}_0}{\|\hat{\bm{w}}\|\|\bm{w}_0\|}
  &= \frac{\mu}{\sqrt{\nu}},
  \label{eq:result-Omega-positive}
 \end{align}
 where $\mu$ is the unique minimizing point of a convex function 
 \begin{align}
  g(\mu) = \frac{\mu}{2} + \frac{\Omega+1}{2\Omega(\Omega \mu+1)}
  +\frac{1}{2}\log\frac{\Omega \mu+2}{(\Omega \mu+1)(\Omega \mu+2-1/\alpha)},
  \quad \mu>0,
  \label{eq:function-g}
 \end{align}
 and
 \begin{align}
%  \nu = \frac{m\Omega+2}{m\Omega+1}(m-1) + 2m - 1
  \nu = -\Omega \mu^2 + \frac{(\Omega \mu+1)(\Omega \mu+2)}{\Omega \mu+2-\frac{1}{\alpha}}.
  \label{eq:nu}
 \end{align}
\end{claim}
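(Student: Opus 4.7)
The plan is to realize $\hat{\bm{w}}$ as the unique minimizer over $\mathbb{R}_+^p$ of the strictly convex potential $H(\bm{w})=\frac{1}{2}\bm{w}^\top\bm{S}\bm{w}-\sum_{i=1}^p\log w_i$, whose stationarity condition $(\bm{S}\bm{w})_i=1/w_i$ is exactly the diagonal-scaling equation (\ref{eq:diagonal-scaling}). Since $\bm{w}_0=(\Omega+1)^{-1/2}\bm{1}$ is proportional to $\bm{1}$, the normalizing factors in the cosine similarity cancel and $\hat{\bm{w}}^\top\bm{w}_0/(\|\hat{\bm{w}}\|\|\bm{w}_0\|)$ equals $m/\sqrt{r}$ with $m=\bm{1}^\top\hat{\bm{w}}/p$ and $r=\|\hat{\bm{w}}\|^2/p$. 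Hence the whole task reduces to computing the large-$p$ limits of the two scalar observables $m$ and $r$ and identifying them with $\mu$ and $\nu$.

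Next I would apply the replica recipe to the Gibbs measure $\propto e^{-\beta H(\bm{w})}$ at inverse temperature $\beta\to\infty$, which concentrates on $\hat{\bm{w}}$. Writing $\mathbb{E}_{\bm{S}}[\log Z_\beta]=\lim_{k\to 0}k^{-1}\log\mathbb{E}_{\bm{S}}[Z_\beta^k]$ and noting that the data enter only through the quadratic forms $(\bm{w}^a)^\top\bm{S}\bm{w}^a=n^{-1}\sum_t((\bm{w}^a)^\top\bm{x}_{(t)})^2$, the $n$ independent Gaussian integrals give
\[
 \mathbb{E}_{\bm{S}}[Z_\beta^k]=\int\prod_{a=1}^k d\bm{w}^a\,\prod_{a,i}(w_i^a)^\beta\,\det\!\Big(\bm{I}_k+\frac{\beta}{n}Q\Big)^{-n/2},
\]
with overlap matrix $Q_{ab}=(\bm{w}^a)^\top\bm\Sigma\bm{w}^b$. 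The spike structure (\ref{eq:spike}) decomposes $Q_{ab}=p\Omega m^a m^b+\bm{w}^a{}^\top\bm{w}^b$, which suggests the natural scalar order parameters $m^a=\bm{1}^\top\bm{w}^a/p$ and $q_{ab}=\bm{w}^a{}^\top\bm{w}^b/p$, introduced via delta functions with conjugate fields.

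Under the replica-symmetric ansatz $m^a\equiv m$, $q_{aa}\equiv Q_0$, $q_{ab}\equiv q$ for $a\ne b$, the $\log\det$ admits a closed form because $\bm{I}_k+(\beta/n)Q$ has rank-one-plus-block structure; the remaining $\bm{w}$-integral factorizes across sites and, after a Hubbard--Stratonovich transform to linearize the off-diagonal $q$-coupling, reduces to a one-dimensional integral against $w^\beta\,dw$ with a Gaussian-type effective potential. Taking the replica limit $k\to 0$ followed by the zero-temperature limit $\beta\to\infty$ with the customary rescaling $\beta(Q_0-q)\to\text{const}$, I expect the saddle-point conditions to collapse to a single-variable variational principle which I would identify as the minimization of $g(\mu)$ in (\ref{eq:function-g}); the companion second-moment saddle-point equation should output $\nu$ as in (\ref{eq:nu}), so that $m\to\mu$ and $r\to\nu$, giving the cosine similarity $\mu/\sqrt{\nu}$.

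The main obstacle will be the bookkeeping in the replica-symmetric saddle-point reduction: tracking the rank-one spike contribution through the $k\times k$ log-determinant, carrying out the on-site integral against the singular $\log w$ prior without spurious divergences, and organizing the $\beta\to\infty$ scaling so that only $\hat{\bm{w}}$ survives. Once $g(\mu)$ is obtained, strict convexity and the existence of a unique minimizer can be verified directly from (\ref{eq:function-g}); this is consistent with the strict convexity of $H$ and retroactively supports the replica-symmetric ansatz, though of course the overall argument remains non-rigorous, as reflected in the designation ``claim''.
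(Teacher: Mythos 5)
Your proposal follows essentially the same route as the paper: the same Hamiltonian and Gibbs measure concentrating on $\hat{\bm{w}}$ as $\beta\to\infty$, the same replica trick with order parameters $m_a=\bm{1}^\top\bm{w}_a/p$ and $q_{ab}=\bm{w}_a^\top\bm{w}_b/p$ introduced via delta functions and conjugate fields, the same replica-symmetric ansatz with the $\beta(Q_0-q)\to\mathrm{const}$ rescaling, and the same reduction of the cosine similarity to $\mu/\sqrt{\nu}$. The only piece you leave unexecuted is the final saddle-point bookkeeping, and the paper carries it out exactly as you anticipate, reducing the stationarity equations in $(\nu,\mu,\eta,w^2)$ to the one-variable convex function $g(\mu)$ and the formula (\ref{eq:nu}) for $\nu$.
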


%We prove that the quantity $\mu/\sqrt{\nu}$ in (\ref{eq:result-Omega-positive})
%converges to the right hand side of Eq.~(\ref{eq:result-Omega-zero}) as $\Omega\to 0$.
Let us consider extreme cases $\Omega\to 0$, $\Omega\to\infty$ and $\alpha\to \infty$.
As $\Omega\to 0$,
the quantity $\mu/\sqrt{\nu}$ expectedly converges to the right hand side of (\ref{eq:result-Omega-zero}).
For the other two extreme cases, $\mu/\sqrt{\nu}$ converges to 1.
This consequence is natural since $\Omega\to\infty$ means that the signal is infinitely large compared to the noise,
and $\alpha\to\infty$ corresponds to the classical limit.
The proof of these statements is given in Appendix~\ref{section:extreme}.

As a final remark, we consider what happens if $n<p$.
In this case, the equation (\ref{eq:diagonal-scaling}) may not have a solution,
depending on $\bm{S}$.
If $\bm\Sigma=\bm{I}$, a result of geometric probability \cite{CoverEfron1967,Wendel1962} implies that
(\ref{eq:diagonal-scaling}) admits a solution with probability
\begin{align}
 \sum_{i=0}^{n-1} \binom{p-1}{i}\Bigl(\frac{1}{2}\Bigr)^{p-1}.
 \label{eq:geometric-probability}
\end{align}
See Appendix~\ref{section:geometric-probability} for more details.
As $p\to\infty$,
the probability converges to 0 if $\alpha<1/2$ and 1 if $\alpha>1/2$.
This may be seen as a phase transition phenomenon.
%We numerically study the cases $\bm\Sigma\neq\bm{I}$ in Section~\ref{section:numerical}.

The rest of the paper is as follows.
In Section~\ref{section:formal-proof},
we derive the two claims using the replica method.
In Section~\ref{section:numerical},
we perform numerical experiments for validating the formulas
as well as studying the cases that $\bm\Sigma$ is not a spike covariance model.
Section~\ref{section:discussion} concludes with open problems.
Proofs are given in Appendices.

\section{Non-rigorous proof based on the replica method} \label{section:formal-proof}

We derive the claims stated in the preceding section
using the replica method.
We will put the replica symmetry assumption
and exchange integral and limits without justification.
The outline is similar to the case of principal component analysis
(e.g.\ Chapter 3 of \cite{Watanabe}).
%For the principal component analysis,
%rigorous proofs are given in \cite{JohnstoneLu2004,Nadler2008,Paul2007}.

\subsection{The saddle point equation}

Let $\hat{\bm{w}}\in\mathbb{R}^p$ be
the solution of (\ref{eq:diagonal-scaling}).
Then $\hat{\bm{w}}$ is the unique minimizer of a strictly convex function
\begin{align}
 H(\bm{w}) = H(\bm{w}|\bm{S}) = \sum_{i=1}^p (-\log w_i)
 + \frac{1}{2}\bm{w}^\top\bm{S}\bm{w}.
\label{eq:Hamiltonian}
\end{align}
We call $H$ the Hamiltonian.
Define the partition function by
\[
 Z(\beta|\bm{S})
 = \int_{\bm{w}\in\mathbb{R}_+^p} \exp(-\beta H(\bm{w}|\bm{S})) {\rm d}\bm{w},
 \quad \beta>0.
\]
%where the integration is with respect to the Lebesgue measure.
The free energy density is defined by
\[
 f(\beta|\bm{S})
 = -\frac{1}{p\beta}\log Z(\beta|\bm{S}).
\]
In order to obtain the macroscopic variables appearing in Claim~\ref{claim:Omega-zero} and \ref{claim:Omega-positive}, we calculate
\begin{align}
 \bar{f} = \lim_{\beta\to\infty} \lim_{p\to\infty} {\rm E}_{\bm{X}}[f(\beta|\bm{S})],
 \label{eq:f-bar}
\end{align}
where ${\rm E}_{\bm{X}}$ denotes the expectation with respect to
$\bm{X}=(\bm{x}_{(1)},\ldots,\bm{x}_{(n)})$.

The replica method first calculates
${\rm E}_{\bm{X}}[Z^r(\beta|\bm{S})]$
for positive integers $r$ and then formally applies an identity
\begin{align}
 \lim_{r\to 0}\frac{\partial}{\partial r}\log {\rm E}_{\bm{X}}[Z^r(\beta|\bm{S})]
 = {\rm E}_{\bm{X}}[\log Z(\beta|\bm{S})]
 \label{eq:replica}
\end{align}
as if $r$ is a real number.
We will also put the replica symmetry assumption
and exchange integration and limits without justification.

In the following, we assume the spike covariance model (\ref{eq:spike})
including the case $\Omega=0$.
%Denote the set of positive definite matrices of order $r$ by ${\rm Sym}_+(r)$.
We use abbreviation $\bm{w}^\beta=\prod_i w_i^\beta$
for vectors $\bm{w}=(w_i)$.
Recall that $\alpha_p=n/p\to \alpha\in[1,\infty)$ as $p\to\infty$.

Proof of all lemmas is given in Appendix~\ref{section:lemma-proof}.

\begin{lemma} \label{lemma:1}
Let $r$ be a positive integer.
Then we have
\begin{align}
 {\rm E}_{\bm{X}}[Z^r(\beta|\bm{S})]
 = \int
 \exp\left(
 p\mathcal{T}_r(\bm{Q},\bm{m})
 \right)
 \prod_{a=1}^r \bm{w}_a^\beta {\rm d}\bm{w}_a,
 \label{eq:lemma-1}
\end{align}
where $\bm{w}_1,\ldots,\bm{w}_r\in\mathbb{R}_+^p$,
\begin{align}
\bm{Q}=\Bigl(\frac{\bm{w}_a^\top\bm{w}_b}{p}\Bigr)\in\mathbb{R}^{r\times r},
\quad \bm{m}=\Bigl(\frac{\bm{w}_a^\top\bm{1}}{p}\Bigr)\in\mathbb{R}^r,
\label{eq:macroscopic-variables}
\end{align}
and the function $\mathcal{T}_r$ is defined by
\begin{align}
 \mathcal{T}_r(\bm{Q},\bm{m})
 &= -\frac{\alpha_p}{2}\log \left|\bm{I}+\frac{\beta}{\alpha_p}(\Omega\bm{m}\bm{m}^\top+\bm{Q})\right|.
 \label{eq:T_r}
 \end{align}
\end{lemma}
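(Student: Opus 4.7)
The plan is to compute $E_{\bm X}[Z^r(\beta|\bm S)]$ as a nested Gaussian integral, performing the inner expectation over $\bm X$ in closed form and recognizing that the result depends on the replica variables $\bm w_1,\ldots,\bm w_r$ only through the macroscopic variables $\bm Q$ and $\bm m$ defined in (\ref{eq:macroscopic-variables}).

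First I would write $Z^r$ by taking $r$ copies of the partition-function integral, indexed by a replica label $a=1,\ldots,r$. Using $H(\bm w_a|\bm S) = -\sum_i\log w_{a,i} + \tfrac12\bm w_a^\top\bm S\bm w_a$, the logarithmic term contributes $\prod_a \bm w_a^\beta$ to the integrand, while the quadratic terms combine into
\[
 \exp\!\left(-\tfrac{\beta}{2}\sum_{a=1}^r \bm w_a^\top\bm S\bm w_a\right).
\]
Substituting $\bm S=(1/n)\sum_t\bm x_{(t)}\bm x_{(t)}^\top$ gives a sum over samples of $(\bm w_a^\top\bm x_{(t)})^2/n$, so the exponent factorises across the i.i.d.\ samples $\bm x_{(t)}$.

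Next I would carry out the expectation for a single sample. For fixed $\bm w_1,\ldots,\bm w_r$, the vector $\bm y=(\bm w_1^\top\bm x,\ldots,\bm w_r^\top\bm x)^\top$ is centred Gaussian with covariance matrix $\bm\Sigma_y$ whose $(a,b)$ entry is $\bm w_a^\top\bm\Sigma\bm w_b$. Under the spike model $\bm\Sigma=\Omega\bm 1\bm 1^\top/p+\bm I$ this entry equals $p(\Omega m_a m_b + Q_{ab})$, so $\bm\Sigma_y = p(\Omega\bm m\bm m^\top+\bm Q)$. Then the standard Gaussian identity
\[
 E\!\left[\exp\!\left(-\tfrac{\beta}{2n}\bm y^\top\bm y\right)\right]
 = \left|\bm I+\tfrac{\beta}{n}\bm\Sigma_y\right|^{-1/2}
\]
reduces the single-sample expectation to a determinant. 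Raising to the $n$-th power (one factor per sample) and using $n=\alpha_p p$ gives
\[
 E_{\bm X}\!\left[\exp\!\left(-\tfrac{\beta}{2}\sum_a\bm w_a^\top\bm S\bm w_a\right)\right]
 = \left|\bm I+\tfrac{\beta}{\alpha_p}(\Omega\bm m\bm m^\top+\bm Q)\right|^{-\alpha_p p/2}
 = \exp\!\bigl(p\,\mathcal T_r(\bm Q,\bm m)\bigr),
\]
which matches the definition of $\mathcal T_r$ in (\ref{eq:T_r}). Combining with the $\prod_a\bm w_a^\beta$ factor yields (\ref{eq:lemma-1}).

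I do not anticipate any serious obstacle: the proof is a straightforward bookkeeping exercise. The one place to be careful is the algebraic identification $\bm w_a^\top\bm\Sigma\bm w_b=p(\Omega m_a m_b+Q_{ab})$, which is what forces the macroscopic variables $\bm m$ and $\bm Q$ to appear and is the whole reason the replica method has a chance of working here; everything else is Fubini plus a Gaussian integral. The existence of the Gaussian integral (i.e.\ $\bm I+(\beta/n)\bm\Sigma_y\succ 0$) is automatic since $\bm\Sigma_y\succeq 0$ and $\beta>0$.
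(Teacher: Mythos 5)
Your proposal is correct and follows essentially the same route as the paper's own proof: factorize the quadratic part of the Hamiltonian over the i.i.d.\ samples, observe that the projections $\bm{w}_a^\top\bm{x}$ form an $r$-dimensional centred Gaussian with covariance $p(\Omega\bm{m}\bm{m}^\top+\bm{Q})$, evaluate the resulting Gaussian integral as a determinant, and raise to the $n$-th power using $n=\alpha_p p$. The key algebraic identification $\bm{w}_a^\top\bm\Sigma\bm{w}_b=p(\Omega m_am_b+Q_{ab})$ that you flag is exactly the step the paper highlights as well.
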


The quantities in Eq.~(\ref{eq:macroscopic-variables}) are macroscopic variables of interest.
By applying the Fourier inversion and saddle point approximation to (\ref{eq:lemma-1}), we obtain the following lemma.

\begin{lemma} \label{lemma:Fourier-saddle}
Let $r$ be a positive integer.
Then we have
\begin{align}
 \lim_{p\to\infty}\frac{1}{p}\log{\rm E}_{\bm{X}}[Z(\beta|\bm{S})^r]
 &= \sup_{\bm{Q},\bm{m}}
 \{\mathcal{S}_r(\bm{Q},\bm{m})
 + \mathcal{T}_r(\bm{Q},\bm{m})
 \Bigr\}
 \label{eq:optim}
\end{align}
up to an additional constant, where
\begin{align}
\mathcal{S}_r(\bm{Q},\bm{m})
&= 
 \inf_{\hat{\bm{Q}},\hat{\bm{m}}}\sup_{\bm{w}}
 \Bigl(
  \frac{1}{2}\mathop{\rm tr}(\hat{\bm{Q}}\bm{Q})
    -\hat{\bm{m}}^\top\bm{m}
 + 
   \beta\sum_{i=1}^r\log w_i
    -\frac{\bm{w}^\top \hat{\bm{Q}}\bm{w}}{2}
    +\hat{\bm{m}}^\top\bm{w}
  \Bigr)
 \label{eq:S_r}
\end{align}
and $\mathcal{T}_r$ is given in (\ref{eq:T_r}) after $\alpha_p$ is replaced with $\alpha$.
\end{lemma}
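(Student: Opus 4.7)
My plan is to follow the standard statistical-physics decoupling of macroscopic from microscopic variables via delta-function insertions, followed by a saddle-point approximation in $p\to\infty$. Starting from the representation in Lemma \ref{lemma:1}, observe that $p\mathcal{T}_r$ depends on the microscopic replica vectors $\bm{w}_1,\ldots,\bm{w}_r$ only through the $r\times r$ overlap matrix $\bm{Q}$ and the $r$-vector $\bm{m}$ defined in (\ref{eq:macroscopic-variables}). The first step is to promote these into free integration variables by inserting the identity
\[
1 = \int \prod_{1\le a\le b\le r} \delta\bigl(pQ_{ab} - \bm{w}_a^\top\bm{w}_b\bigr) \prod_{a=1}^r \delta\bigl(pm_a - \bm{w}_a^\top\bm{1}\bigr)\, d\bm{Q}\, d\bm{m}
\]
into the integrand of (\ref{eq:lemma-1}), so that $\mathcal{T}_r(\bm{Q},\bm{m})$ becomes independent of the $\bm{w}_a$.

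Next I would express each delta in its Laplace/Fourier form $\delta(py)\propto \int e^{-p\hat{y}\,y}\,d\hat{y}$, introducing conjugate variables $\hat{\bm{Q}},\hat{\bm{m}}$ with signs chosen to match (\ref{eq:S_r}). The exponent then splits into a purely macroscopic piece $p[\mathcal{T}_r(\bm{Q},\bm{m})+\tfrac12\mathop{\rm tr}(\hat{\bm{Q}}\bm{Q})-\hat{\bm{m}}^\top\bm{m}]$ together with a residual $\bm{w}$-integrand of the form
\[
\exp\Bigl(-\tfrac{1}{2}\sum_{a,b}\hat{Q}_{ab}\bm{w}_a^\top\bm{w}_b + \sum_{a}\hat{m}_a \bm{w}_a^\top\bm{1}\Bigr)\prod_{a=1}^r \bm{w}_a^\beta.
\]
The crucial structural point is that every term in this remaining exponent sums over the site index $i=1,\ldots,p$ independently, so the $\bm{w}$-integral factorises as $I(\hat{\bm{Q}},\hat{\bm{m}})^{p}$ where $I$ is a single-site $r$-dimensional integral over $\bm{w}\in\mathbb{R}_+^r$.

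Combining, ${\rm E}_{\bm{X}}[Z(\beta|\bm{S})^r]$ is represented as an integral of $\exp(p\Psi)$ over $(\bm{Q},\bm{m},\hat{\bm{Q}},\hat{\bm{m}})$, with $\Psi=\mathcal{T}_r+\tfrac12\mathop{\rm tr}(\hat{\bm{Q}}\bm{Q})-\hat{\bm{m}}^\top\bm{m}+\log I(\hat{\bm{Q}},\hat{\bm{m}})$. Laplace's method extracts the stationary value of $\Psi$, the sup/inf structure being dictated by the conventional contour deformation: maximum in the primary variables $(\bm{Q},\bm{m})$, minimum in the conjugate pair $(\hat{\bm{Q}},\hat{\bm{m}})$. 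Replacing $\log I(\hat{\bm{Q}},\hat{\bm{m}})$ itself by its Laplace approximation $\sup_{\bm{w}\in\mathbb{R}_+^r}\bigl[\beta\sum_{a=1}^r\log w_a-\tfrac12\bm{w}^\top\hat{\bm{Q}}\bm{w}+\hat{\bm{m}}^\top\bm{w}\bigr]$ (a correction absorbed in the ``additional constant'') and grouping the $(\hat{\bm{Q}},\hat{\bm{m}})$ minimisation into the definition of $\mathcal{S}_r$ produces exactly (\ref{eq:optim}); the replacement $\alpha_p\to\alpha$ in $\mathcal{T}_r$ follows from continuity.

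The real obstacle is not computational but conceptual: every step after the delta-insertion is a formal manoeuvre — deformation of the Fourier contour onto the real axis, interchange of the $p\to\infty$ limit with the outer integrals, and a Laplace approximation of a finite-dimensional per-site integral that is only asymptotically exact as $\beta\to\infty$. These are precisely the non-rigorous moves announced in the preamble of Section~\ref{section:formal-proof}, so in keeping with the paper's stated scope I would simply invoke the saddle-point prescription rather than attempt to justify it.
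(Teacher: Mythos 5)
Your proposal follows exactly the paper's own route: delta-function insertion of the overlap variables $(\bm{Q},\bm{m})$, Fourier representation introducing the conjugates $(\hat{\bm{Q}},\hat{\bm{m}})$, factorisation of the $\bm{w}$-integral into the $p$-th power of a single-site $r$-dimensional integral, and a saddle-point evaluation with $\sup$ over the primary and $\inf$ over the conjugate variables. The argument is correct at the paper's (admittedly non-rigorous) level of precision, and your observation that the inner $\sup_{\bm{w}}$ is itself a Laplace approximation exact only as $\beta\to\infty$ is a fair reading of a step the paper leaves implicit.
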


Since the optimization problem (\ref{eq:optim}) is not easy to solve,
we put the replica symmetry assumption:
suppose that the extremal point satisfies
\[
 Q_{ab}
 =\begin{cases}
  Q& \mbox{if}\ a=b,\\
  q& \mbox{if}\ a\neq b,
 \end{cases},
 \quad
  m_a = m,
\]
where $Q>0$, and
 \[
 \hat{Q}_{ab} = \begin{cases}
   \hat{Q}& \mbox{if}\ a=b\\
   -\hat{q}& \mbox{if}\ a\neq b
  \end{cases},
  \quad \hat{m}_a=\hat{m},
 \]
where $\hat{Q}>0$.
Under these assumptions, the optimal $\bm{w}$ is also written as $w_a=w$.

\begin{lemma} \label{lemma:replica-S_r-T_r}
 Under the replica symmetry, we have
\begin{align*}
 \mathcal{T}_r(\bm{Q},\bm{m}) &= -\frac{r\alpha}{2}\log\Bigl(1+\frac{\beta}{\alpha}(Q-q)\Bigr)
  -\frac{\alpha}{2}\log\Bigl(1+\frac{\beta r(\Omega m^2+ q)}{\alpha(1+\frac{\beta}{\alpha}(Q-q))}
  \Bigr)
\end{align*}
and
\begin{align*}
 \mathcal{S}_r(\bm{Q},\bm{m})
 & = \inf_{\hat{Q},\hat{q},\hat{m}}
 \Bigl\{
 \frac{r}{2}\hat{Q}Q - \frac{r(r-1)}{2}\hat{q}q
  -r\hat{m}m
  +\frac{r}{2}\log(2\pi)
  -\frac{r}{2}(\hat{Q}+\hat{q}-r\hat{q})w^2
  \\
 &\quad\quad +r\hat{m}w
  +r\beta\log w
  -\frac{r}{2}\log(\hat{Q}+\hat{q}+\frac{\beta}{w^2})
  -\frac{1}{2}\log\Bigl(1-\frac{r\hat{q}}{\hat{Q}+\hat{q}+\frac{\beta}{w^2}}\Bigr)
  \Bigr\},
  \end{align*}
where $w$ is the unique positive root of the quadratic equation
\begin{align}
  -(\hat{Q}+\hat{q}-r\hat{q})w + \hat{m} + \frac{\beta}{w} = 0.
  \label{eq:w-quad}
\end{align}
\end{lemma}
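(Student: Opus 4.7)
The plan is to substitute the replica symmetric ansatz directly into the formulas of Lemma~\ref{lemma:Fourier-saddle}, exploiting the fact that every RS matrix has the form $a\bm{I}_r+b\bm{1}_r\bm{1}_r^\top$ and is diagonalized by the orthogonal decomposition $\mathbb{R}^r=\mathbb{R}\bm{1}_r\oplus\bm{1}_r^\perp$, yielding one ``longitudinal'' eigenvalue with eigenvector $\bm{1}_r$ and an $(r-1)$-fold ``transverse'' eigenvalue.

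For $\mathcal{T}_r$ the computation is purely algebraic. Under RS, $\Omega\bm{m}\bm{m}^\top+\bm{Q}=(Q-q)\bm{I}_r+(\Omega m^2+q)\bm{1}_r\bm{1}_r^\top$, so the argument of the log-determinant has longitudinal eigenvalue $1+\frac{\beta}{\alpha}(Q-q)+\frac{r\beta}{\alpha}(\Omega m^2+q)$ and transverse eigenvalue $1+\frac{\beta}{\alpha}(Q-q)$ of multiplicity $r-1$. Taking the log of the product and factoring $1+\frac{\beta}{\alpha}(Q-q)$ out of the longitudinal eigenvalue produces precisely the stated expression.

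For $\mathcal{S}_r$ the ``deterministic'' terms reduce by elementary spectral calculations: $\frac{1}{2}\mathrm{tr}(\hat{\bm{Q}}\bm{Q})=\frac{r}{2}\hat{Q}Q-\frac{r(r-1)}{2}\hat{q}q$, $\hat{\bm{m}}^\top\bm{m}=r\hat{m}m$, and upon setting $w_a=w$, $-\frac{1}{2}\bm{w}^\top\hat{\bm{Q}}\bm{w}+\hat{\bm{m}}^\top\bm{w}+\beta\sum_a\log w_a=-\frac{r}{2}(\hat{Q}+\hat{q}-r\hat{q})w^2+r\hat{m}w+r\beta\log w$. The remaining logarithmic terms come from interpreting the inner $\sup_{\bm{w}}$ of (\ref{eq:S_r}) as the saddle-point evaluation of the site integral $I(\hat{\bm{Q}},\hat{\bm{m}})=\int_{\mathbb{R}_+^r}\exp(-\frac{1}{2}\bm{w}^\top\hat{\bm{Q}}\bm{w}+\hat{\bm{m}}^\top\bm{w}+\beta\sum_a\log w_a)\,d\bm{w}$ that feeds the Fourier-inverted representation underlying Lemma~\ref{lemma:Fourier-saddle}. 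The only cross-replica coupling under RS is the rank-one piece $\frac{\hat{q}}{2}(\bm{1}^\top\bm{w})^2$; linearize it by the Hubbard--Stratonovich identity $\exp(\frac{\hat{q}}{2}(\bm{1}^\top\bm{w})^2)=\int\frac{dz}{\sqrt{2\pi}}\exp(-\frac{z^2}{2}+\sqrt{\hat{q}}\,z\,\bm{1}^\top\bm{w})$ so that $I=\int\frac{dz}{\sqrt{2\pi}}e^{-z^2/2}\phi(z)^r$ with $\phi(z)=\int_0^\infty\exp(-\frac{(\hat{Q}+\hat{q})w^2}{2}+(\hat{m}+\sqrt{\hat{q}}\,z)w+\beta\log w)\,dw$. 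Laplace's method on $\phi$ gives the inner saddle condition $-(\hat{Q}+\hat{q})w_*+\hat{m}+\sqrt{\hat{q}}\,z+\beta/w_*=0$ and a Gaussian correction $\frac{1}{2}\log(2\pi)-\frac{1}{2}\log(\hat{Q}+\hat{q}+\beta/w_*(z)^2)$; the envelope theorem then yields $(d/dz)\log\phi=\sqrt{\hat{q}}\,w_*(z)$, so the outer saddle satisfies $z_*=r\sqrt{\hat{q}}\,w_*(z_*)$, and setting $w:=w_*(z_*)$ collapses the inner condition to the quadratic (\ref{eq:w-quad}). The second derivative of $-z^2/2+r\log\phi$ at $z_*$ is $-1+r\hat{q}/(\hat{Q}+\hat{q}+\beta/w^2)$, generating the final fluctuation factor $-\frac{1}{2}\log(1-r\hat{q}/(\hat{Q}+\hat{q}+\beta/w^2))$; collecting all contributions (with the various $\log(2\pi)$ factors tallying to $\frac{r}{2}\log(2\pi)$) reproduces the formula in the lemma.

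The main obstacle is the nested Laplace bookkeeping: one must apply the envelope theorem carefully so that the cross-derivatives between the $w$- and $z$-saddles vanish and only the two claimed logarithmic residues survive. A subsidiary caveat is that the Hubbard--Stratonovich step requires $\hat{q}\ge 0$, which must be assumed (or verified a posteriori at the saddle), consistent with the non-rigorous character of the replica computation.
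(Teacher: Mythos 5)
Your proposal is correct and reproduces the lemma, but the treatment of $\mathcal{S}_r$ takes a genuinely different route from the paper's. For $\mathcal{T}_r$ you do exactly what the paper does: decompose the RS matrix along $\bm{1}_r$ and its orthogonal complement and read off the two eigenvalues. For $\mathcal{S}_r$, the paper also works directly in $r$ dimensions: it treats the $\sup_{\bm{w}}$ in (\ref{eq:S_r}) as the Laplace evaluation of the $r$-dimensional site integral, obtains the stationarity condition $-\hat{\bm{Q}}\bm{w}+\hat{\bm{m}}+\beta/\bm{w}=\bm{0}$ (which under RS collapses to (\ref{eq:w-quad})), and computes the single fluctuation determinant $\log|\hat{\bm{Q}}+\bm{D}_{\beta/\bm{w}^2}|$ by the same rank-one eigen-decomposition, splitting it as $r\log(\hat{Q}+\hat{q}+\beta/w^2)+\log(1-r\hat{q}/(\hat{Q}+\hat{q}+\beta/w^2))$. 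You instead decouple the replicas by Hubbard--Stratonovich and perform two nested one-dimensional Laplace approximations, recovering the same two logarithmic residues as the inner ($w$) and outer ($z$) Gaussian corrections; I checked that your saddle conditions, exponent value, and both second derivatives agree term by term with the paper's result, including the $\tfrac{r}{2}\log(2\pi)$ bookkeeping. What your route buys is a decoupled representation $\int e^{-z^2/2}\phi(z)^r\,dz/\sqrt{2\pi}$ that extends naturally to non-integer $r$ (useful for the subsequent $r\to 0$ limit), at the cost of the extra assumption $\hat{q}\ge 0$, which you correctly flag; the paper's direct determinant computation is shorter and needs no sign condition. You also correctly identified the one subtlety in reading the lemma, namely that the $\sup_{\bm{w}}$ of (\ref{eq:S_r}) must be understood as the full saddle-point evaluation of the site integral from Lemma~\ref{lemma:Fourier-saddle} (fluctuations included), since a literal supremum would not produce the two log terms.
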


Our goal is to calculate $\bar{f}$ in (\ref{eq:f-bar}).
Using the replica trick (\ref{eq:replica})
and exchanging limits, we have
\begin{align*}
-\bar{f}
 &= \lim_{\beta\to\infty}\lim_{r\to 0}\sup_{Q,q,m}
\Bigl(
\frac{1}{\beta}\frac{\partial\mathcal{S}_r}{\partial r}
+\frac{1}{\beta}\frac{\partial\mathcal{T}_r}{\partial r}
\Bigr)
\end{align*}
We scale the variables as
\[
% \hat{Q}+\hat{q}=E\beta,
% \quad \hat{q}=F\beta^2,
% \quad
Q-q=\frac{\chi}{\beta}
\]
according to \cite{Watanabe}.
Then the free variables are $Q,m$ and $\chi$.

After some calculation, we obtain the following equation.
See Appendix for details.

\begin{lemma}\label{lemma:f-bar}
 Under the assumptions mentioned above, we have
 \begin{align*}
-\bar{f} &=
\sup_{Q,m,\chi}\inf_w
\Bigl\{
 - \frac{\Omega m^2+Q}{2(1+\frac{\chi}{\alpha})}
% \\
%&\quad\quad
  +\frac{1}{\chi}\Bigl(\frac{Q}{2}-wm+\frac{w^2}{2}\Bigr)
   -\frac{Q}{2w^2}+\frac{2m}{w}-\frac{3}{2}+\log w
\Bigr\}.
 \end{align*}
\end{lemma}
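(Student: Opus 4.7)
The plan is to differentiate $\mathcal{T}_r$ and $\mathcal{S}_r$ of Lemma~\ref{lemma:replica-S_r-T_r} with respect to $r$ at $r=0$, apply the scaling $Q-q=\chi/\beta$ together with a companion $\beta$-scaling of the auxiliary variables $\hat{Q},\hat{q},\hat{m}$, eliminate those auxiliary variables via their stationarity conditions, and finally send $\beta\to\infty$.

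The $\mathcal{T}_r$ contribution is immediate: the inner argument of the second logarithm in Lemma~\ref{lemma:replica-S_r-T_r} equals $1$ at $r=0$, so direct differentiation gives
\[
\frac{1}{\beta}\frac{\partial\mathcal{T}_r}{\partial r}\bigg|_{r=0}
=-\frac{\alpha}{2\beta}\log\!\Big(1+\tfrac{\chi}{\alpha}\Big)-\frac{\Omega m^{2}+q}{2(1+\chi/\alpha)},
\]
where $Q-q=\chi/\beta$ has been substituted. The first summand vanishes and $q\to Q$ as $\beta\to\infty$, producing the first term $-(\Omega m^{2}+Q)/(2(1+\chi/\alpha))$ of the target formula.

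For $\mathcal{S}_r$, the envelope theorem lets me differentiate the bracketed expression of~(\ref{eq:S_r}) in $r$ while holding $\hat{Q},\hat{q},\hat{m},w$ fixed at their optimizers. Three contributions are nonlinear in $r$: from $-\tfrac{r(r-1)}{2}\hat{q}q$ I get $+\hat{q}q/2$, from $-\tfrac{r}{2}(\hat{Q}+\hat{q}-r\hat{q})w^{2}$ I get $-(\hat{Q}+\hat{q})w^{2}/2$, and from $-\tfrac{1}{2}\log(1-r\hat{q}/D)$ with $D=\hat{Q}+\hat{q}+\beta/w^{2}$ I get $\hat{q}/(2D)$; all other terms are linear in $r$. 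Assembling everything produces an explicit rational expression in $Q,q,m,\hat{Q},\hat{q},\hat{m},w$, with $w$ still tied to the $r=0$ form of~(\ref{eq:w-quad}).

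The main obstacle is the reduction from seven variables to the stated three ($Q,m,\chi$) plus the inner variable $w$. Following~\cite{Watanabe}, I would scale $\hat{Q}=\beta\tilde Q$, $\hat{q}=\beta\tilde q$, $\hat{m}=\beta\tilde m$ so that~(\ref{eq:w-quad}) at $r=0$ becomes $\beta$-independent, and expand the three stationarity conditions $\partial_{\hat Q}=\partial_{\hat q}=\partial_{\hat m}=0$ in powers of $1/\beta$. The $\hat m$-stationarity should pin down $\tilde m$ in terms of $m$ and $w$; the $\hat Q$- and $\hat q$-stationarities are nearly degenerate because $Q-q=O(1/\beta)$, and their $O(1/\beta)$ balance is precisely where the factor $1/\chi$ is born. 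After substituting and dividing by $\beta$, the surviving terms should be exactly $\frac{1}{\chi}(\tfrac{Q}{2}-wm+\tfrac{w^{2}}{2})-\tfrac{Q}{2w^{2}}+\tfrac{2m}{w}-\tfrac{3}{2}+\log w$: the first group from the balanced $\hat{Q}Q$, $\hat{q}q$, and $(\hat Q+\hat q)w^{2}$ pieces; the $\log w$ from $\beta\log w/\beta$; the remainder from the $\log D$ expansion and the $-\tilde m m$ piece. Tracking every constant — especially the $-\tfrac{3}{2}$ — and confirming that the internal $\sup_{w}$ of $\mathcal{S}_r$ becomes the claimed $\inf_w$ after the limit are the delicate parts.
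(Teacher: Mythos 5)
Your overall route is the same as the paper's: differentiate $\mathcal{S}_r$ and $\mathcal{T}_r$ at $r=0$ via the envelope theorem, substitute $Q-q=\chi/\beta$, eliminate the conjugate variables by stationarity, and send $\beta\to\infty$. Your $\mathcal{T}_r$ computation and your list of the $r$-derivative contributions to $\mathcal{S}_r$ (the $+\hat{q}q/2$, $-(\hat{Q}+\hat{q})w^2/2$ and $\hat{q}/(2D)$ terms plus the linear ones) agree exactly with what the paper obtains. The problem is in the step you yourself flag as delicate: the proposed scaling $\hat{Q}=\beta\tilde{Q}$, $\hat{q}=\beta\tilde{q}$, $\hat{m}=\beta\tilde{m}$ is not the correct order of the saddle point and the reduction would fail with it. The paper scales $\hat{Q}+\hat{q}=E\beta$ but $\hat{q}=F\beta^2$; equivalently, $\hat{Q}$ and $\hat{q}$ are individually $O(\beta^2)$ with opposite signs and only their sum is $O(\beta)$. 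This matters because the two terms that survive to produce the $\tfrac{1}{\chi}\bigl(\tfrac{Q}{2}-wm+\tfrac{w^2}{2}\bigr)$ piece are $\tfrac12\hat{Q}Q+\tfrac12\hat{q}q=\tfrac12(\hat{Q}+\hat{q})Q-\hat{q}\chi/(2\beta)$ and $\hat{q}/\bigl(2(\hat{Q}+\hat{q}+\beta/w^2)\bigr)$: with $\hat{q}=O(\beta^2)$ both are $O(\beta)$ and survive division by $\beta$, and stationarity in $F$ then forces $E+1/w^2=1/\chi$, which is exactly where the $1/\chi$ factor (and, after substituting $E=1/\chi-1/w^2$, the $-Q/(2w^2)+2m/w-3/2$ terms) comes from. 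With your $\hat{q}=O(\beta)$ ansatz these two terms are $O(1)$ and vanish in the limit, leaving an objective that is linear in $E$ with no compensating constraint, so the infimum over $E$ is $-\infty$ rather than the claimed expression. Expanding the stationarity conditions "in powers of $1/\beta$" would eventually reveal that no finite $\tilde{q}$ solves them, but as written the ansatz is wrong, and the hardest part of the derivation is left at the level of "should be exactly."

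One smaller point you raise deserves a concrete answer: the $\sup_w$ of $\mathcal{S}_r$ becomes the stated $\inf_w$ because the paper trades the inf-variable $\hat{m}$ for $w$ through the bijection $\hat{m}=(\hat{Q}+\hat{q}-r\hat{q})w-\beta/w$ of Eq.~(\ref{eq:w-quad}); $w$ then simply inherits the infimum from $\hat{m}$, so no separate argument about interchanging $\sup$ and $\inf$ is needed at this stage.
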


Furthermore, we formally exchange the supremum and infimum,
and then rescale the variables as
\begin{align}
 Q=w^2\nu,\quad m=w\mu,
 \quad \chi=w^2\eta.
 \label{eq:rescale-Q-m}
\end{align}
Then
\begin{align*}
 -\bar{f} &= \inf_{w}\sup_{Q,m,\chi}
 \Bigl\{
 -\frac{\Omega m^2+Q}{2(1+\frac{\chi}{\alpha})}
 + \frac{1}{\chi}\Bigl(\frac{Q}{2}-wm+\frac{w^2}{2}\Bigr)
  -\frac{Q}{2w^2}+\frac{2m}{w}-\frac{3}{2}+\log w
 \Bigr\}.
 \\
 &= \inf_{w}\sup_{\nu,\mu,\eta}
 \Bigl\{
 -\frac{w^2(\Omega\mu^2+\nu)}{2(1+\frac{w^2\eta}{\alpha})}
 + \frac{1}{\eta}\Bigl(\frac{\nu}{2}-\mu+\frac{1}{2}\Bigr)
  -\frac{\nu}{2}+2\mu-\frac{3}{2}+\log w
 \Bigr\}.
\end{align*}
%Put $w=z$.
Denote the objective function by
\[
 g(\nu,\mu,\eta,w)
 = 
 -\frac{w^2(\Omega \mu^2+\nu)}{2(1+\frac{w^2\eta}{\alpha})}
 + \frac{1}{\eta}\Bigl(\frac{\nu}{2}-\mu+\frac{1}{2}\Bigr)
  -\frac{\nu}{2}+2\mu-\frac{3}{2}+\frac{1}{2}\log (w^2).
\]

Finally, the stationary conditions are
\begin{align}
 \frac{\partial g}{\partial \nu}
 &= \frac{-w^2}{2(1+\frac{w^2\eta}{\alpha})} + \frac{1}{2\eta} - \frac{1}{2} = 0,
 \label{eq:g-nu}
 \\
 \frac{\partial g}{\partial\mu}
 &= \frac{-w^2\Omega\mu}{1+\frac{w^2\eta}{\alpha}} - \frac{1}{\eta} + 2 = 0,
 \label{eq:g-mu}
 \\
 \frac{\partial g}{\partial\eta}
 &= \frac{w^4(\Omega \mu^2+\nu)}{2\alpha(1+\frac{w^2\eta}{\alpha})^2}
 -\frac{1}{\eta^2}\Bigl(\frac{\nu}{2}-\mu+\frac{1}{2}\Bigr)
 \label{eq:g-eta}
 = 0,
 \\
 \frac{\partial g}{\partial (w^2)}
 &= -\frac{\Omega \mu^2+\nu}{2(1+\frac{w^2\eta}{\alpha})^2} + \frac{1}{2w^2} = 0.
 \label{eq:g-w2}
\end{align}

\subsection{Derivation of Claim~\ref{claim:Omega-zero} and Claim~\ref{claim:Omega-positive}}

First, assume $\Omega=0$.
Then it is immediate from (\ref{eq:g-mu}),
(\ref{eq:g-nu}), (\ref{eq:g-w2}) and (\ref{eq:g-eta}) in this order
that
\[
 \eta = \frac{1}{2},
 \quad w^2 = \frac{1}{1-\frac{1}{2\alpha}},
 \quad \nu = \frac{1}{1-\frac{1}{2\alpha}}
 \quad \mbox{and}
 \quad
 \mu = \frac{1-\frac{3}{8\alpha}}{1-\frac{1}{2\alpha}}.
\]
Hence we obtain
\begin{align*}
 \frac{\mu}{\sqrt{\nu}}
 = \frac{1-\frac{3}{8\alpha}}{\sqrt{1-\frac{1}{2\alpha}}}.
\end{align*}
This implies Claim~\ref{claim:Omega-zero}.

Next, consider the case $\Omega>0$.
By Eq.~(\ref{eq:g-nu}),
\begin{align*}
 w^2 &= \frac{\frac{1-\eta}{\eta}}{1-\frac{1-\eta}{\alpha}}.
\end{align*}
Substituting it to (\ref{eq:g-mu}) yields
\begin{align}
 \eta &= \frac{\Omega \mu+1}{\Omega \mu+2}.
 \label{eq:eta-mu}
\end{align}
Therefore $w^2$ is written in terms of $\mu$ as
\begin{align}
 w^2 &= \frac{\Omega \mu+2}{(\Omega \mu+1)(\Omega \mu + 2 -\frac{1}{\alpha})}.
 \label{eq:w2-mu}
\end{align}
From Eq.~(\ref{eq:g-w2}), we have
\begin{align*}
 -\frac{(\Omega \mu^2+\nu)(1-\frac{1-\eta}{\alpha})^2}{2} + \frac{1-\frac{1-\eta}{\alpha}}{2(\frac{1-\eta}{\eta})} = 0
\end{align*}
and
\begin{align}
 \nu &= -\Omega \mu^2 + \frac{1}{\frac{1-\eta}{\eta}(1-\frac{1-\eta}{\alpha})}
 \nonumber
 \\
 &= -\Omega \mu^2 + \frac{(\Omega \mu+1)(\Omega \mu+2)}{\Omega \mu+2-\frac{1}{\alpha}}.
 \label{eq:nu-mu}
\end{align}
Now we obtain the expression of $\nu,\eta,w^2$ in terms of $\mu$.
Finally, substitute them into $g$ to obtain
\begin{align*}
 g(\mu)
 &= \frac{\Omega \mu^2+\Omega \mu+2}{2(\Omega \mu+1)}
 -\frac{3}{2} + \frac{1}{2}\log\frac{\Omega \mu+2}{(\Omega \mu+1)(\Omega \mu+2-\frac{1}{\alpha})}
 \\
 &= \frac{\mu}{2} + \frac{\Omega+1}{2\Omega(\Omega\mu+1)}
 + \frac{1}{2}\log\frac{\Omega \mu+2}{(\Omega \mu+1)(\Omega \mu+2-\frac{1}{\alpha})}
 + {\rm (const.)}.
\end{align*}
This function is well defined and strictly convex whenever $\Omega>0$ and $\alpha\geq 1$
since a function $x\mapsto\log x-\log(x-b)$ is convex if $b>0$.
The minimizer of $g$ exists 
since $g(\mu)\simeq \mu/2\to\infty$ as $\mu\to\infty$
and $g'(0)<0$.

Now Claim~\ref{claim:Omega-positive} follows.
%Since $\Omega>0$, we have
%\[
% g(\mu) \simeq \frac{\mu}{2} \to\infty,
% \quad \mu\to\infty.
%\]
%We also have
%\begin{align*}
% g'(0)
% &= \frac{1}{2} - \frac{\Omega + 1}{2} - \frac{\Omega}{2}
% + \frac{1}{2}\left(
%  \frac{\Omega}{2} - \frac{\Omega}{2-1/\alpha}
% \right)
% \\
% &= \frac{\Omega}{2}\left(
% -\frac{3}{2}-\frac{1}{2-1/\alpha}
% \right)
% \\
% & < 0.
%\end{align*}
%Since $g(\mu)$ is convex, there exists the unique minimizer in $(0,\infty)$.

\section{Numerical experiments} \label{section:numerical}

We numerically compute the cosine similarity between $\hat{\bm{w}}$ and $\bm{w}_0$ under various conditions.
Denote the ratio $n/p$ by $\alpha=\alpha_p$ for simplicity.
%Recall that $\alpha=n/p$ is the ratio of the sample size to the model size.

Figure~\ref{fig:alpha-profile} shows the $\alpha$-profile of the cosine similarity
under the Gaussian spike model (\ref{eq:spike})
for $\Omega\in\{0,1,10,100\}$ and $p=100$.
The number of simulation is 100 for each set of parameters.
Although we see that the simulated values are close to the theoretical curve,
there are some gaps for small $\alpha$ if $\Omega=1$ and $\Omega=10$.
The gap is not so large
if we focus on the other macroscopic variables $Q=\hat{\bm{w}}^\top\hat{\bm{w}}/p$
and $m=\hat{\bm{w}}^\top\bm{1}/p$.
See Figure~\ref{fig:Q-m}.
Note that the cosine similarity is equal to $m/\sqrt{Q}$.

We assumed $\alpha\geq 1$
at the beginning of the paper to make the sample covariance matrix $\bm{S}$ positive definite.
However, the equation (\ref{eq:diagonal-scaling}) can admit a solution
even if $\bm{S}$ is not positive definite.
In fact, the solution exists
if and only if $\bm{S}$ is strictly copositive \cite{MarshallOlkin1968},
meaning that $\bm{w}^\top\bm{S}\bm{w}>0$
for any non-negative vector $\bm{w}\neq \bm{0}$.
%Note that positive definiteness implies strict copositivity but the converse is not true.
Figure~\ref{figure:ok} shows the probability that $\bm{S}$ is strictly copositive
for various $\alpha>0$ and $\Omega>-1$.
Note that the spike covariance model (\ref{eq:spike}) is positive definite
even for $-1<\Omega<0$.
The probability tends to 1 as $p\to\infty$ if $\alpha$ is greater than a threshold.
The threshold is lower if $\Omega$ is larger.
If $\Omega=0$, the result is consistent with the formula (\ref{eq:geometric-probability}).

In Figure~\ref{fig:outside}, we plot the cosine similarity
as a function of $\Omega>-1$ for $\alpha=1$ and $\alpha=0.7$.
It is observed that the similarity increases as $\Omega$ tends to $-1$.
This phenomenon is expected since the diagonal scaling problem (\ref{eq:diagonal-scaling})
is essentially the same as that of the inverse matrix.
More precisely, if $\hat{\bm{W}}$ solves $\hat{\bm{W}}\bm{S}\hat{\bm{W}}\bm{1}=\bm{1}$,
then it also satisfies $\hat{\bm{W}}^{-1}\bm{S}^{-1}\hat{\bm{W}}^{-1}\bm{1}=\bm{1}$.

We examine other covariance models
\begin{align}
 \bm\Sigma = \frac{1}{p}\bm{1}\bm{1}^\top
 + \bm{Q}\bm{U}\mathrm{diag}(\bm{h})\bm{U}^\top\bm{Q}^\top,
 \label{eq:other-models}
\end{align}
where $\bm{Q}\in\mathbb{R}^{p\times (p-1)}$ is a fixed matrix with properties
$\bm{Q}^\top\bm{1}=\bm{0}$ and $\bm{Q}^\top\bm{Q}=\bm{I}$,
$\bm{U}$ is a random rotation matrix of order $p-1$
and $\bm{h}$ is a positive vector in $\mathbb{R}^{p-1}$ given below.
Note that $\bm\Sigma$ becomes the identity matrix if $\bm{h}=(1,\ldots,1)$.
We define the power-law model by
\begin{align}
 \bm{h} \propto \Bigl(1,\frac{1}{2},\ldots,\frac{1}{p-1}\Bigr)
 \label{eq:power-model}
\end{align}
and the stepwise model by
\begin{align}
 \bm{h} \propto \Bigl(\underbrace{1,\ldots,1}_{p/2},\underbrace{\frac{1}{2},\ldots,\frac{1}{2}}_{p/2-1}\Bigr),
 \label{eq:stepwise-model}
\end{align}
where the proportional constant is determined to impose $\mathrm{tr}(\bm\Sigma)=p$.
Figure~\ref{fig:other} shows the cosine similarity under these models.
We generated a random rotation matrix $\bm{U}$ in (\ref{eq:other-models})
each time when $\bm{S}$ is sampled.
The replica solution (\ref{eq:result-Omega-zero}) for the identity covariance matrix
is surprisingly well fitted for the two cases.

Figure~\ref{fig:t-distribution} shows the cosine similarity
when the Gaussian distribution is replaced with the standardized $t$-distribution with 3 degrees of freedom.
The similarity is slightly smaller than the Gaussian case but the difference is not drastic.

\section{Discussion} \label{section:discussion}

In this paper, we analytically and numerically investigated
the diagonal scaling problem (\ref{eq:diagonal-scaling}) under the limit $n/p\to \alpha$.
In particular, it is claimed that the angle between the estimated vector $\hat{\bm{w}}$ and the true vector $\bm{w}_0$ does not converge to zero.
The replica solution fits the numerical experiments
except for small $\alpha$, as observed in Figure~\ref{fig:alpha-profile}.
The difference may be caused by the replica symmetry breaking (e.g.\ \cite{Nishimori}).
We have to fill the gap and rigorously prove the claims.
It is worth mentioning that the behavior of $\hat{\bm{w}}$ was relatively stable
with respect to change of probabilistic assumptions,
as seen in Figure~\ref{fig:other} and Figure~\ref{fig:t-distribution}.
On the other hand, it may be possible to consistently estimate $\bm{w}_0$
under some sparsity assumptions,
as discussed for principal component analysis in \cite{JohnstoneLu2009}.

We could not establish analytical expressions for the cases $-1<\Omega<0$.
The formula in Claim~\ref{claim:Omega-positive} is not extrapolated to the region.
Formulas for $\alpha<1$ are needed as well.
How to deal with the case of $\alpha=0$, called the high-dimensional low sample size data,
is highly non-trivial for the diagonal scaling problem.
Refer to \cite{JungMarron2009,YataAoshima2009} for this direction on principal component analysis.

Finally, although we focused only on convergence of the cosine similarity,
the limit distribution of $\{w_i\}_{i=1}^p$ like the Marchenko--Pastur law are also of interest.

\section*{Acknowledgments}

This study was supported by Kakenhi Grant Numbers
JP17K00044 and JP26108003.

\bibliographystyle{plain}
\bibliographystyle{elsarticle-num}
\bibliographystyle{elsarticle-harv}
\bibliography{ogi}

\appendix
\section*{Appendix}

\section{Proof of lemmas} \label{section:lemma-proof}

\begin{proof}[Proof of Lemma~\ref{lemma:1}]
By definition of the partition function, we have
 \begin{align*}
 {\rm E}_{\bm{X}}[Z^r(\beta|\bm{S})]
 &= \int 
 {\rm E}_{\bm{X}}\left[\exp\left(-\frac{\beta}{2}\sum_{a=1}^r \bm{w}_a^\top\bm{S}\bm{w}_a\right)
 \right]
 \prod_a\bm{w}_a^\beta{\rm d}\bm{w}_a
 \\
% &= \int \prod_{a=1}^r \bm{w}_a^\beta d\bm{w}_a
% {\rm E}_{\bm{X}}
% \left[\exp\left(-\frac{\beta}{2n}\sum_{t=1}^n\sum_{a=1}^r (\bm{w}_a^\top\bm{x}_{(t)})^2\right)
% \right]
% \\
 &= \int 
 \left({\rm E}_{\bm{x}_{(1)}}\left[\exp\left(-\frac{\beta}{2n}\sum_{a=1}^r (\bm{w}_a^\top\bm{x}_{(1)})^2\right)
 \right]\right)^n
 \prod_a\bm{w}_a^\beta{\rm d}\bm{w}_a.
 \end{align*}
 The simultaneous distribution of $\{\bm{w}_a^\top\bm{x}_{(1)}\}_{a=1}^r$
 is the $r$-dimensional Gaussian distribution with mean zero and covariance matrix
\begin{align*}
  \bm{w}_a^\top \bm\Sigma \bm{w}_b
  &= \bm{w}_a^\top (\Omega\frac{\bm{1}\bm{1}^\top}{p}+\bm{I}) \bm{w}_b
  \\
%  &= \Omega\frac{(\bm{w}_a^\top\bm{1})(\bm{w}_b^\top\bm{1})}{p}
%  + \bm{w}_a^\top\bm{w}_b
%  \\
  &= p(\Omega m_am_b + Q_{ab}).
\end{align*}
Recall that $m_a=(\bm{w}_a^\top\bm{1})/p$
and $Q_{ab}=(\bm{w}_a^\top\bm{w}_b)/p$.
%where $m_a$ and $Q_{ab}$ are components of $\bm{m}$ and $\bm{Q}$, respectively.
The expectation we need is
\begin{align*}
 {\rm E}_{\bm{x}_{(1)}}
 \left[\exp\left(-\frac{\beta}{2n}\sum_{a=1}^r (\bm{w}_a^\top\bm{x}_{(1)})^2\right)
 \right]
 & = \int_{\mathbb{R}^r} \frac{e^{
 -\frac{1}{2p}\bm{\xi}^\top (\Omega\bm{m}\bm{m}^{\top}+\bm{Q})^{-1}\bm{\xi}
 -\frac{\beta}{2n}\bm{\xi}^\top\bm{\xi}
 }
 }{(2\pi p)^{r/2}|\Omega\bm{m}\bm{m}^\top+\bm{Q}|^{1/2}}
 {\rm d}\bm\xi
 \\
 &= \left|
  \bm{I} + \frac{\beta p}{n}(\Omega\bm{m}\bm{m}^\top + \bm{Q})
 \right|^{-1/2}.
\end{align*}
Hence
\begin{align*}
 {\rm E}_{\bm{X}}[Z^r(\beta|\bm{S})]
 &= \int
 \left|
  \bm{I} + \frac{\beta p}{n}(\Omega\bm{m}\bm{m}^\top + \bm{Q})
 \right|^{-n/2}
 \prod_a \bm{w}_a^\beta{\rm d}\bm{w}_a
  \\
  &= \int \exp\left(p\mathcal{T}_r(\bm{Q},\bm{m})\right)
 \prod_a \bm{w}_a^\beta{\rm d}\bm{w}_a,
\end{align*}
which completes the proof.
\end{proof}

\begin{proof}[Proof of Lemma~\ref{lemma:Fourier-saddle}]
% The integral regions are
% $\mathcal{A}={\rm Sym}_+(r)\times\mathbb{R}_+^r$
% and
% $\hat{\mathcal{A}}=(\hat{\bm{Q}}_0+{\rm i}\mathbb{R}^{r\times r})\times (\hat{\bm{m}}_0+{\rm i}\mathbb{R}^r)$ for fixed $\hat{\bm{Q}}_0$ and $\hat{\bm{m}}_0$.
%
 We formally use Dirac's delta function and its Fourier representation,
 but it will be justified by Schwartz' distribution theory.
 Define $\bm{Q}_*=(\bm{w}_a^\top\bm{w}_b/p)$ and $\bm{m}_*=(\bm{w}_a^\top\bm{1}/p)$
 as functions of $\{\bm{w}_a\}$,
 whereas $\bm{Q}$ and $\bm{m}$ denote free variables.
 Then
 \begin{align*}
 I &= \int e^{p\mathcal{T}_r(\bm{Q}_*,\bm{m}_*)}\prod_a\bm{w}_a^\beta {\rm d}\bm{w}_a
 \\
 &= \int \Bigl(\int \delta(\bm{Q}-\bm{Q}_*)\delta(\bm{m}-\bm{m}_*)e^{p\mathcal{T}_r(\bm{Q},\bm{m})}{\rm d}\bm{Q}{\rm d}\bm{m}
 \Bigr)\prod_a\bm{w}_a^\beta{\rm d}\bm{w}_a
 \\
 &= C\int \Bigl(\int \Bigl( \int e^{p(\frac{1}{2}{\rm tr}(\hat{\bm{Q}}(\bm{Q}-\bm{Q}_*))-\hat{\bm{m}}^\top(\bm{m}-\bm{m}_*))}{\rm d}\hat{\bm{Q}}{\rm d}\hat{\bm{m}}
 \Bigr)e^{p\mathcal{T}_r(\bm{Q},\bm{m})}{\rm d}\bm{Q}{\rm d}\bm{m}
 \Bigr)\prod_a\bm{w}_a^\beta{\rm d}\bm{w}_a
 \\
 &= C\int \Bigl(\int e^{p(\frac{1}{2}{\rm tr}(\hat{\bm{Q}}\bm{Q})-\hat{\bm{m}}^\top\bm{m})}
 \Bigl( \int e^{p(-\frac{1}{2}{\rm tr}(\hat{\bm{Q}}\bm{Q}_*)+\hat{\bm{m}}^\top\bm{m}_*)}
 \prod_a\bm{w}_a^\beta{\rm d}\bm{w}_a
 \Bigr){\rm d}\hat{\bm{Q}}{\rm d}\hat{\bm{m}}
 \Bigr)e^{p\mathcal{T}_r(\bm{Q},\bm{m})}{\rm d}\bm{Q}{\rm d}\bm{m},
 \end{align*}
 where $C$ is a constant depending only on $p$ and $r$.
 The innermost integral is
 \begin{align*}
 &\int e^{p(-\frac{1}{2}{\rm tr}(\hat{\bm{Q}}\bm{Q}_*)+\hat{\bm{m}}^\top\bm{m}_*)}
 \prod_a\bm{w}_a^\beta{\rm d}\bm{w}_a
 \\
 &= \int e^{-\frac{1}{2}\sum_i\sum_{a,b}\hat{Q}_{ab}w_{ia}w_{ib}+\sum_i\sum_a\hat{m}_aw_{ia}}
 \prod_a\bm{w}_a^\beta{\rm d}\bm{w}_a
 \\
 &= \left[
  \int_{\mathbb{R}_+^r} e^{-\frac{1}{2}\bm{w}^\top\hat{\bm{Q}}\bm{w}+\hat{\bm{m}}^\top\bm{w}}
  \bm{w}^\beta{\rm d}\bm{w}
 \right]^p.
 \end{align*}
 Therefore
 \[
  I = C\int \Bigl(\int e^{p(\frac{1}{2}{\rm tr}(\hat{\bm{Q}}\bm{Q})-\hat{\bm{m}}^\top\bm{m})}
  \left[
  \int_{\mathbb{R}_+^r} e^{-\frac{1}{2}\bm{w}^\top\hat{Q}\bm{w}+\hat{\bm{m}}^\top\bm{w}}
  \bm{w}^\beta{\rm d}\bm{w}
 \right]^p
 {\rm d}\hat{\bm{Q}}{\rm d}\hat{\bm{m}}
 \Bigr)e^{p\mathcal{T}_r(\bm{Q},\bm{m})}{\rm d}\bm{Q}{\rm d}\bm{m}.
 \]
 Finally, by using the saddle point approximation, we obtain
 (\ref{eq:optim}) and (\ref{eq:S_r}).
\end{proof}

\begin{proof}[Proof of Lemma~\ref{lemma:replica-S_r-T_r}]
 By the assumption of replica symmetry, we have
 \begin{align*}
  \mathcal{T}_r
  &= -\frac{\alpha}{2}\log\left|\bm{I}+\frac{\beta}{\alpha}(\Omega\bm{m}\bm{m}^\top+\bm{Q})
  \right|
  \\
  &= -\frac{\alpha}{2}\log\left|\left(1+\frac{\beta}{\alpha}(Q-q)\right)(\bm{I}-\frac{\bm{1}\bm{1}^\top}{r})+\left(1+\frac{\beta(\Omega m^2r+(Q-q)+qr)}{\alpha}\right)\frac{\bm{1}\bm{1}^\top}{r}
  \right|
  \\
  &= -\frac{\alpha (r-1)}{2}\log\left(1+\frac{\beta}{\alpha}(Q-q)\right)
  -\frac{\alpha}{2}\log\left(1+\frac{\beta}{\alpha}(Q-q)+\frac{\beta r}{\alpha}(\Omega m^2+q)
  \right)
  \\
  &= -\frac{\alpha r}{2}\log\left(1+\frac{\beta}{\alpha}(Q-q)\right)
   -\frac{\alpha}{2}\log\left(1+\frac{\beta r(\Omega m^2+ q)}{\alpha(1+\frac{\beta}{\alpha}(Q-q))}\right).
 \end{align*}
 Next we evaluate $\mathcal{S}_r$ in (\ref{eq:S_r}).
The maximal point $\bm{w}$ satisfies
 \[
  -\hat{\bm{Q}}\bm{w} + \hat{\bm{m}} + \frac{\beta}{\bm{w}} = \bm{0}.
 \]
 Then we have
 \begin{align*}
  \mathcal{S}_r
  &= \inf_{\hat{\bm{Q}},\hat{\bm{m}}}
  \Bigl(\frac{1}{2}\mathop{\rm tr}(\hat{\bm{Q}}\bm{Q})
    -\hat{\bm{m}}^\top\bm{m}
    \\
    & \quad\quad
  +\frac{r}{2}\log(2\pi)-\frac{\bm{w}^\top\hat{\bm{Q}}\bm{w}}{2}
  +\hat{\bm{m}}^\top\bm{w}+\beta \sum_i\log w_i
  -\frac{1}{2}\log|\hat{\bm{Q}}+\bm{D}_{\beta/\bm{w}^2}|
  \Bigr).
 \end{align*}
 By the replica symmetry assumption, $\bm{w}$ is
 written as $\bm{w}=w\bm{1}_r$, where $w$ satisfies (\ref{eq:w-quad}).
 Note also that
 \begin{align*}
  \hat{\bm{Q}}+\bm{D}_{\beta/\bm{w}^2}
  &= (\hat{Q}+\hat{q})\bm{I} - \hat{q}\bm{1}\bm{1}^\top + \frac{\beta}{w^2}\bm{I}
  \\
  &= (\hat{Q}+\hat{q}+\frac{\beta}{w^2})(\bm{I}-\frac{\bm{1}\bm{1}^\top}{r})
  + (\hat{Q}+\hat{q}+\frac{\beta}{w^2} - r\hat{q})\frac{\bm{1}\bm{1}^\top}{r}
 \end{align*}
 and thus
 \begin{align*}
  \log|\hat{\bm{Q}}+\bm{D}_{\beta/\bm{w}^2}|
  &= (r-1)\log(\hat{Q}+\hat{q}+\frac{\beta}{w^2})
  + \log(\hat{Q}+\hat{q}+\frac{\beta}{w^2}-r\hat{q})
  \\
  &= r\log(\hat{Q}+\hat{q}+\frac{\beta}{w^2})
  + \log\Bigl(1 - \frac{r\hat{q}}{\hat{Q}+\hat{q}+\frac{\beta}{w^2}}
  \Bigr).
 \end{align*}
 Then we obtain
 \begin{align*}
 \mathcal{S}_r
 & = \inf_{\hat{Q},\hat{q},\hat{m}}\Bigl(
  \frac{r}{2}\hat{Q}Q - \frac{r(r-1)}{2}\hat{q}q
  -r\hat{m}m
  +\frac{r}{2}\log(2\pi)
  -\frac{r}{2}(\hat{Q}+\hat{q}-r\hat{q})w^2
  \\
  &\quad\quad +r\hat{m}w
  +r\beta\log w
  -\frac{r}{2}\log(\hat{Q}+\hat{q}+\frac{\beta}{w^2})
  -\frac{1}{2}\log(1-\frac{r\hat{q}}{\hat{Q}+\hat{q}+\frac{\beta}{w^2}})
 \Bigr).
 \end{align*}
 This completes the proof.
\end{proof}

\begin{proof}[Proof of Lemma~\ref{lemma:f-bar}]
We calculate
\[
 -\bar{f}
  = \sup_{Q,q,m} \left(\frac{1}{\beta}\frac{\partial\mathcal{S}_r}{\partial r}
 + \frac{1}{\beta}\frac{\partial\mathcal{T}_r}{\partial r}
  \right)
 \Bigr|_{r\to 0} \Bigr|_{\beta\to\infty}.
\]
Since $\hat{m}$ and $w$ have a one-to-one correspondence
by (\ref{eq:w-quad}),
we can use $w$ as an free variable
and determine $\hat{m}$ by
\[
 \hat{m} =  (\hat{Q}+\hat{q}-r\hat{q})w - \frac{\beta}{w}.
\]
We also rescale the variables in $\mathcal{S}_r$ as
\[
 Q-q=\frac{\chi}{\beta},
 \quad \hat{Q}+\hat{q}=E\beta,
 \quad \hat{q}=F\beta^2.
\]
 From Lemma~\ref{lemma:replica-S_r-T_r}, we obtain
 \begin{align*}
  \frac{\partial\mathcal{S}_r}{\partial r}
  \Bigr|_{r\to 0}
  &= \inf_{\hat{Q},\hat{q},\hat{m}}\Bigl(
  \frac{1}{2}\hat{Q}Q+\frac{1}{2}\hat{q}q-\hat{m}m + \frac{1}{2}\log(2\pi)
  -\frac{1}{2}(\hat{Q}+\hat{q})w^2
  \\
  &\quad\quad + \hat{m}w + \beta\log w
  -\frac{1}{2}\log(\hat{Q}+\hat{q}+\frac{\beta}{w^2})
 +\frac{1}{2}\frac{\hat{q}}{\hat{Q}+\hat{q}+\frac{\beta}{w^2}}
  \Bigr)
  \\
  &= \inf_{\hat{Q},\hat{q},w}\Bigl(
  \frac{1}{2}\hat{Q}Q+\frac{1}{2}\hat{q}q-((\hat{Q}+\hat{q})w-\frac{\beta}{w})m + \frac{1}{2}\log(2\pi)
  \\
  &\quad\quad
   +\frac{1}{2}(\hat{Q}+\hat{q})w^2
   - \beta + \beta\log w
  -\frac{1}{2}\log(\hat{Q}+\hat{q}+\frac{\beta}{w^2})
 +\frac{1}{2}\frac{\hat{q}}{\hat{Q}+\hat{q}+\frac{\beta}{w^2}}
  \Bigr)
   \\
  &= \inf_{w,E,F}\Bigl(
   \frac{\beta}{2}EQ - \frac{\beta}{2}F\chi
   -\beta Ewm + \frac{\beta m}{w} + \frac{1}{2}\log(2\pi)
   \\
   &\quad\quad
   +\frac{\beta}{2}Ew^2-\beta + \beta \log w
   -\frac{1}{2}\log(\beta E+\frac{\beta}{w^2})
   +\frac{1}{2}\frac{\beta F}{E+\frac{1}{w^2}}
  \Bigr).
 \end{align*}
 Furthermore,
 \begin{align*}
 &\frac{1}{\beta}\frac{\partial\mathcal{S}_r}{\partial r}
 \Bigr|_{r\to 0} \Bigr|_{\beta\to\infty}
 \\
 &= \inf_{w,E,F} \Bigl(
 \frac{EQ}{2}-\frac{F\chi}{2}-Ewm+\frac{m}{w}+\frac{Ew^2}{2}-1+\log w
 +\frac{F}{2(E+\frac{1}{w^2})}
 \Bigr)
 \\
 &= \inf_{w,E,F}\Bigl(
  E\Bigl(
   \frac{Q}{2} - wm + \frac{w^2}{2}
  \Bigr)
   -\frac{F\chi}{2} + \frac{F}{2(E+\frac{1}{w^2})}
   + \Bigl(
    \frac{m}{w}-1+\log w
   \Bigr)
 \Bigr)
 \\
 &= \inf_{w} \Bigl(
  \frac{1}{\chi}\Bigl(\frac{Q}{2}-wm+\frac{w^2}{2}\Bigr)
  + \Bigl(
  -\frac{Q}{2w^2}+\frac{2m}{w}-\frac{3}{2}+\log w
  \Bigr)
 \Bigr),
 \end{align*}
 where the last equality follows from
 \[
  E = \frac{1}{\chi} - \frac{1}{w^2}
  \quad \mbox{and} \quad
  F = \frac{2}{\chi^2}\Bigl(\frac{Q}{2}-wm+\frac{w^2}{2}\Bigr)
 \]
 at the stationary point.
 Similarly we have
\begin{align*}
 \frac{\partial\mathcal{T}_r}{\partial r}
  \Bigr|_{r\to 0}
 &= - \frac{\alpha}{2}\log(1+\frac{\chi}{\alpha})
 -\frac{\beta(\Omega m^2+q)}{2(1+\frac{\chi}{\alpha})}
 \end{align*}
and
\begin{align*}
  \frac{1}{\beta}\frac{\partial\mathcal{T}_r}{\partial r}
 \Bigr|_{r\to 0} \Bigr|_{\beta\to\infty}
 &= - \frac{\Omega m^2+Q}{2(1+\frac{\chi}{\alpha})}.
\end{align*}
Now the result follows from
\begin{align*}
 & \left(\frac{1}{\beta}\frac{\partial\mathcal{S}_r}{\partial r}
 + \frac{1}{\beta}\frac{\partial\mathcal{T}_r}{\partial r}
  \right)
 \Bigr|_{r\to 0} \Bigr|_{\beta\to\infty}
 \\
  &= \inf_{w} \Bigl(
  \frac{1}{\chi}\Bigl(\frac{Q}{2}-wm+\frac{w^2}{2}\Bigr)
  + \Bigl(
  -\frac{Q}{2w^2}+\frac{2m}{w}-\frac{3}{2}+\log w
  \Bigr)
 \Bigr)
 - \frac{\Omega m^2+Q}{2(1+\frac{\chi}{\alpha})}.
\end{align*}
\end{proof}

\section{Limiting behavior as $\Omega\to 0$, $\Omega\to\infty$ and $\alpha\to\infty$} \label{section:extreme}

First consider the case $\Omega\to 0$ for fixed $\alpha$ in Claim~\ref{claim:Omega-positive}.
The objective function $g$ in (\ref{eq:function-g}) is asymptotically written as
\[
 g(\mu) = \frac{1}{2\Omega} + {\rm (const.)}
 + \frac{\Omega}{2}\Bigl(\mu^2
 -\frac{2(1-\frac{3}{8\alpha})}{1-\frac{1}{2\alpha}}\mu
 \Bigr)
 + {\rm O}(\Omega^2)
\]
as $\Omega\to 0$.
Hence the minimizer $\mu$ converges to $(1-\frac{3}{8\alpha})/(1-\frac{1}{2\alpha})$.
The variable $\nu$ is
\begin{align*}
 \nu
 &= 
  -\Omega \mu^2 + \frac{(\Omega \mu+1)(\Omega \mu+2)}{\Omega \mu+2-\frac{1}{\alpha}}
  \\
 & =\frac{1}{1-\frac{1}{2\alpha}} + {\rm O}(\Omega).
\end{align*}

Next consider the case $\Omega\to \infty$ for fixed $\alpha$.
We show that both $\mu$ and $\nu$ converge to 1.
Since the objective function is asymptotically
\begin{align*}
 g(\mu)
 &= \frac{\mu}{2} + \frac{1/\alpha}{2\Omega \mu} - \frac{1}{2}\log \mu - \frac{1}{2}\log\Omega + {\rm O}\Bigl(\frac{1}{\Omega^2}\Bigr)
\end{align*}
as $\Omega\to\infty$,
the stationary condition is
\begin{align*}
0 = g'(\mu) = \frac{1}{2} - \frac{1/\alpha}{2\Omega \mu^2} - \frac{1}{2\mu} + {\rm O}\Bigl(\frac{1}{\Omega^2}\Bigr).
\end{align*}
Solving this equation, we obtain
\[
 \mu = 1 + \frac{1}{\Omega\alpha} + {\rm O}\Bigl(\frac{1}{\Omega^2}\Bigr).
\]
By (\ref{eq:nu-mu}), we have
\begin{align*}
\nu &= -\Omega \mu^2 + \Omega \mu
\left(1 + \frac{1+1/\alpha}{\Omega \mu}
\right)
+{\rm O}\Bigl(\frac{1}{\Omega}\Bigr)
\\
&= \Omega \mu(1-\mu) + 1 + \frac{1}{\alpha}
+{\rm O}\Bigl(\frac{1}{\Omega}\Bigr)
\\
&= \Omega \Bigl(1 + \frac{1}{\Omega\alpha}\Bigr)\Bigl(-\frac{1}{\Omega\alpha}\Bigr) + 1 + \frac{1}{\alpha}
+{\rm O}\Bigl(\frac{1}{\Omega}\Bigr)
\\
&= 1
+{\rm O}\Bigl(\frac{1}{\Omega}\Bigr).
\end{align*}

Finally, consider the case $\alpha\to\infty$ for fixed $\Omega$.
%This means the classical limit.
We show $\mu$ and $\nu$ converge to 1.
The objective function converges to
\begin{align*}
 g(\mu) = \frac{\mu}{2} + \frac{\Omega + 1}{2\Omega(\Omega \mu+1)} - \frac{1}{2}\log(\Omega \mu+1).
\end{align*}
The stationary condition is
\[
 g'(\mu) = \frac{1}{2} - \frac{\Omega + 1}{2(\Omega \mu+1)^2} - \frac{\Omega}{2(\Omega \mu+1)} = 0,
\]
which has the unique positive solution $\mu=1$.
Furthermore, $\nu=-\Omega \mu^2 + \Omega \mu + 1 = 1$.

\section{Proof of Eq.~(\ref{eq:geometric-probability})} \label{section:geometric-probability}

We calculate the probability of the event
that $\bm{S}$ is strictly copositive (see Section~\ref{section:numerical} for the definition).
Denote the data matrix by
\[
\bm{X}= (\bm{x}_1,\ldots,\bm{x}_p) = \begin{pmatrix}
 \bm{x}_{(1)}^\top\\
 \vdots\\
 \bm{x}_{(n)}^\top
 \end{pmatrix}.
\]
Then $\bm{S}=n^{-1}\bm{X}^\top \bm{X}$.
The following are equivalent to each other.
\begin{itemize}
\item[(a)] $\bm{S}$ is strictly copositive
\item[(b)] There is not a non-negative non-zero vector $\bm{v}$
such that $\sum_i v_i\bm{x}_i=\bm{0}$.
\item[(c)] $\bm{x}_1,\ldots,\bm{x}_p$ generates a proper convex cone in $\mathbb{R}^n$.
\end{itemize}
As stated in \cite{CoverEfron1967}, the probability of the event (c)
is given by Eq.~(\ref{eq:geometric-probability})
if $\bm{x}_i$'s are independent
and the distribution of each $\bm{x}_i$ is symmetric with respect to the origin.
This result is due to \cite{Wendel1962}
and related to Schl\"afli's theorem,
which states that $p$ hyperplanes in general position in $\mathbb{R}^n$ divide $\mathbb{R}^n$
into $2\sum_{i=0}^{n-1}\binom{p-1}{i}$ regions.

\newpage

\begin{figure}[htbp]
\centering
 \begin{tabular}{cc}
 \includegraphics[width=0.48\textwidth]{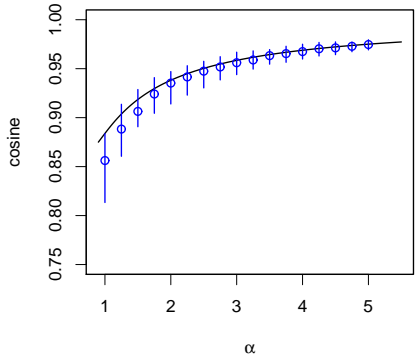}
 & \includegraphics[width=0.48\textwidth]{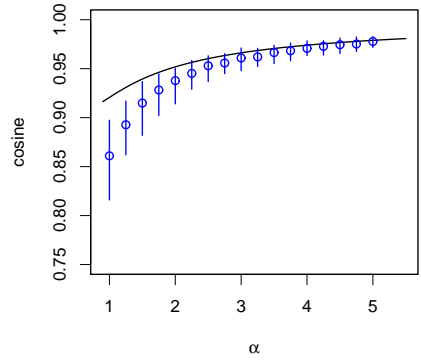}
 \\
 (a) $\Omega=0$.
 & (b) $\Omega=1$.
\\
 \includegraphics[width=0.48\textwidth]{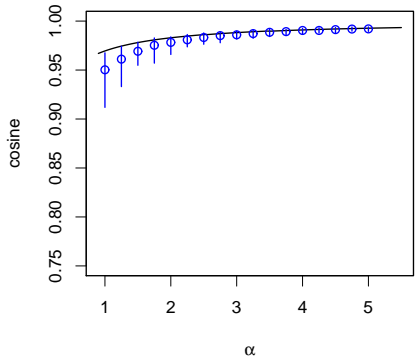}
 & \includegraphics[width=0.48\textwidth]{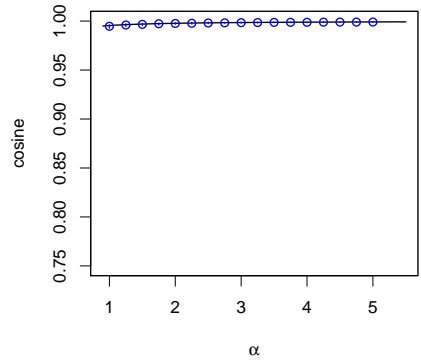}
 \\
 (c) $\Omega=10$.
 & (d) $\Omega=100$.
 \end{tabular}
 \caption{The cosine similarity of $\hat{\bm{w}}$ and $\bm{w}_0$ as a function of $\alpha$
 under the Gaussian spike covariance model with $p=100$.
 The parameter $\Omega$ is set to be (a) $\Omega=0$, (b) $\Omega=1$, (c) $\Omega=10$
 and (d) $\Omega=100$.
 The points and whisker bars indicate the median
 and 90-percent range of 100 simulated values.
 The solid line is the theoretical curve obtained in Claim~\ref{claim:Omega-zero} and Claim~\ref{claim:Omega-positive}.
 }
\label{fig:alpha-profile}
\end{figure}

\begin{figure}[htbp]
\centering
 \begin{tabular}{cc}
 \includegraphics[width=0.48\textwidth]{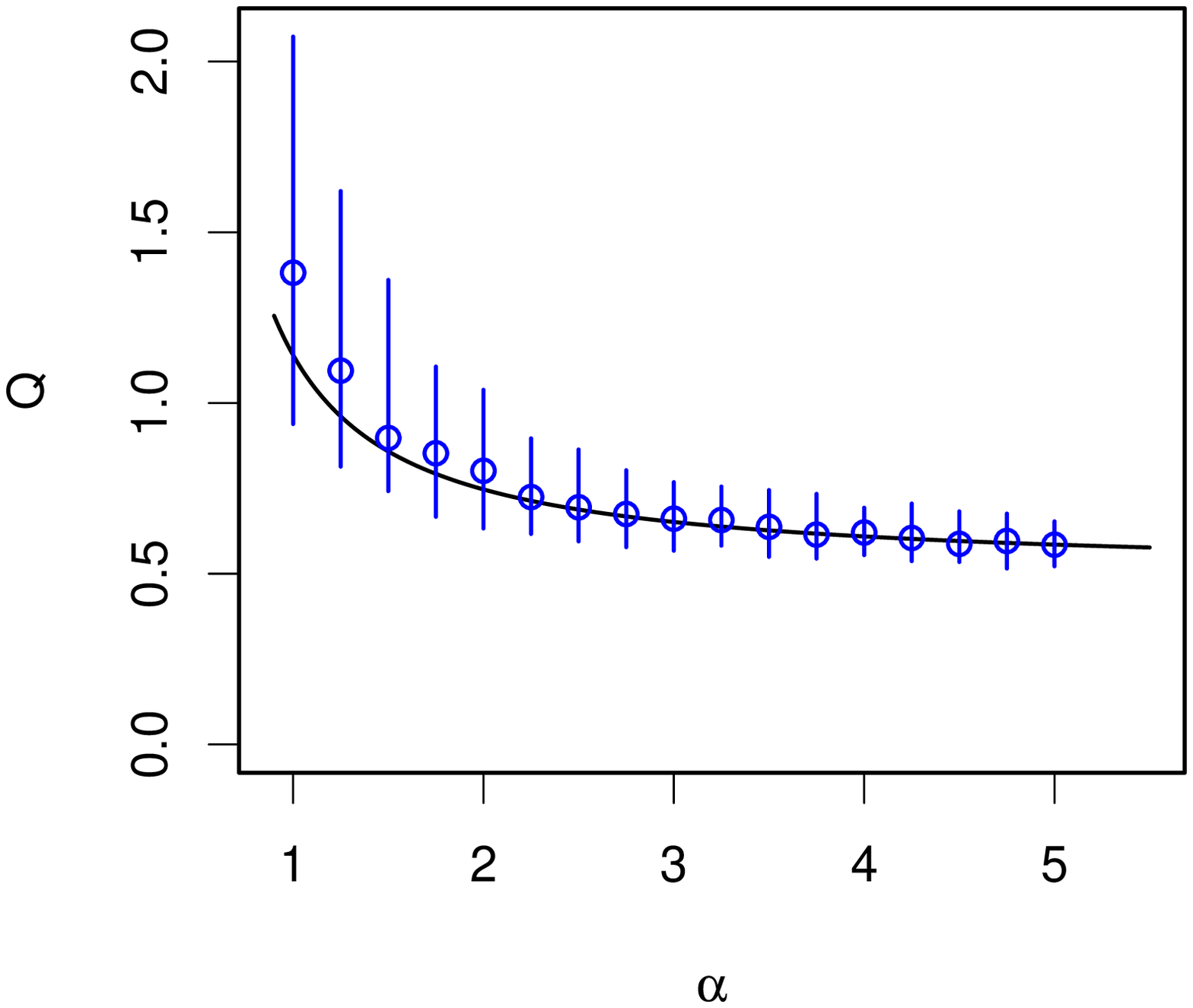}
 & \includegraphics[width=0.48\textwidth]{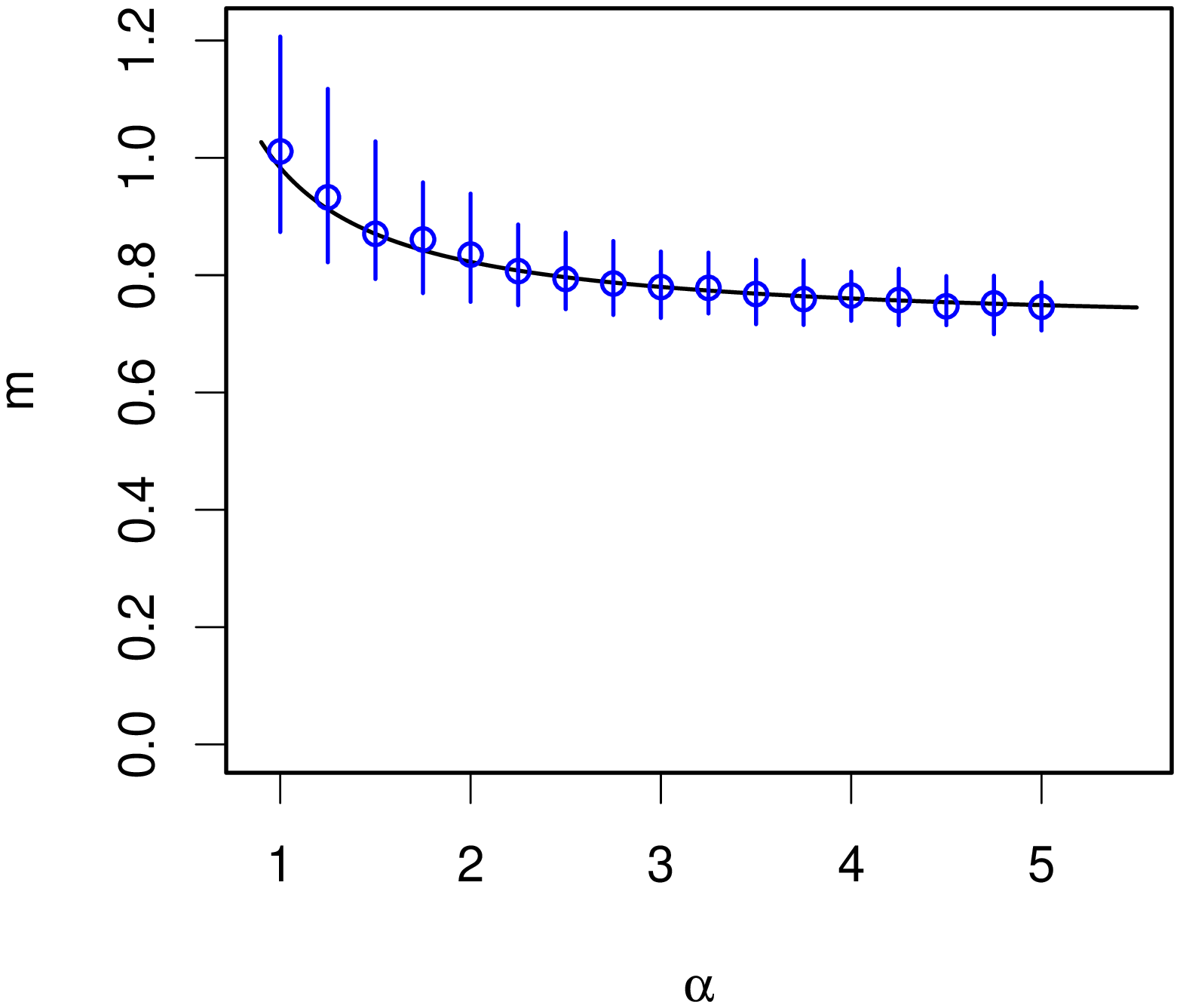}
\end{tabular}
\caption{The macroscopic variables $Q=\hat{\bm{w}}^\top\hat{\bm{w}}/p$ (left) and $m=\hat{\bm{w}}^\top\bm{1}/p$ (right) as a function of $\alpha$
 under the Gaussian spike covariance model with $p=100$ and $\Omega=1$.
 The median and 90-percent range are based on 100 simulated values.
 The solid line is the theoretical curve determined by (\ref{eq:rescale-Q-m}).
}
\label{fig:Q-m}
\end{figure}

\begin{figure}[htbp]
\centering
\begin{tabular}{cc}
 \includegraphics[width=0.48\textwidth]{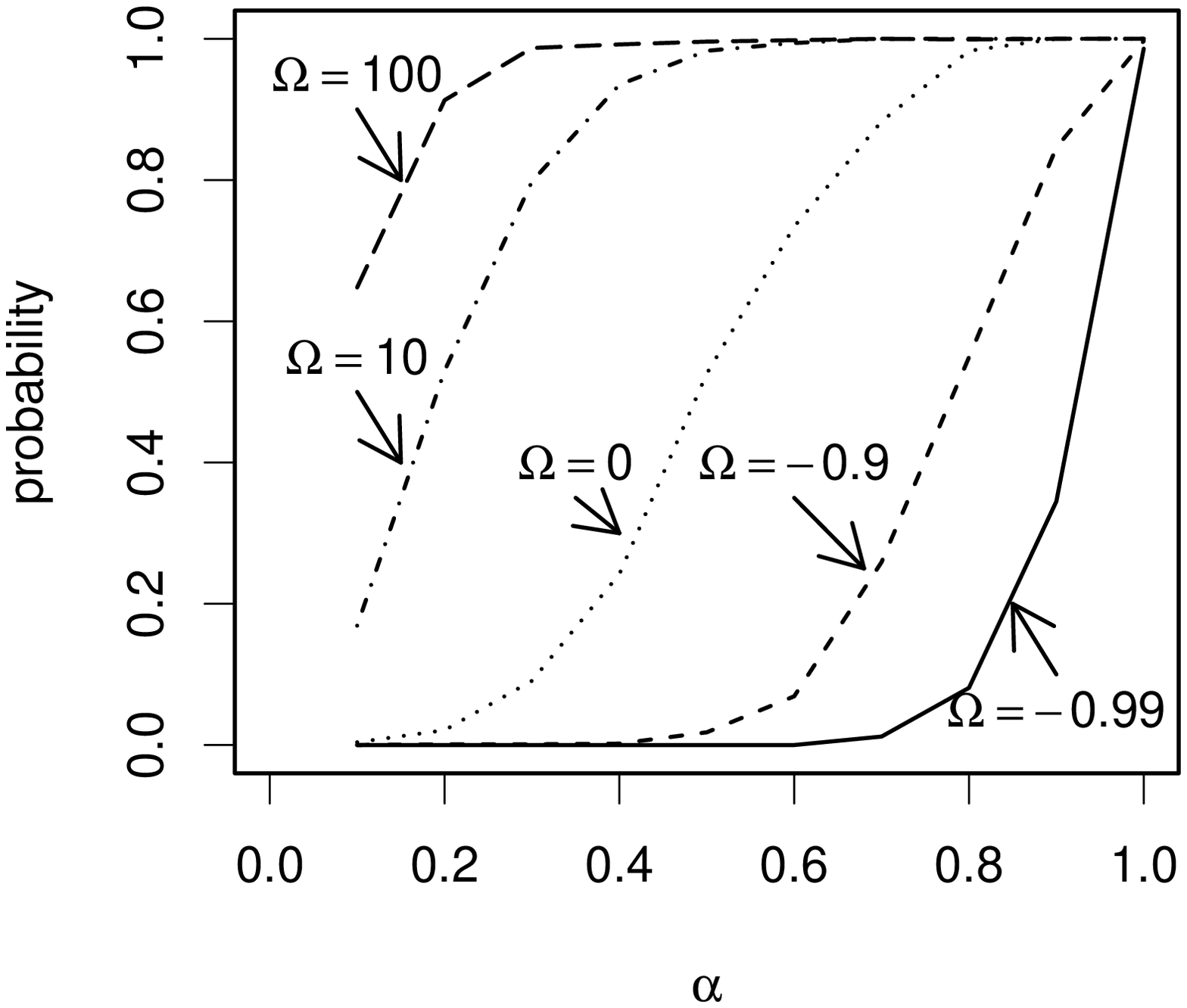}
 &  \includegraphics[width=0.48\textwidth]{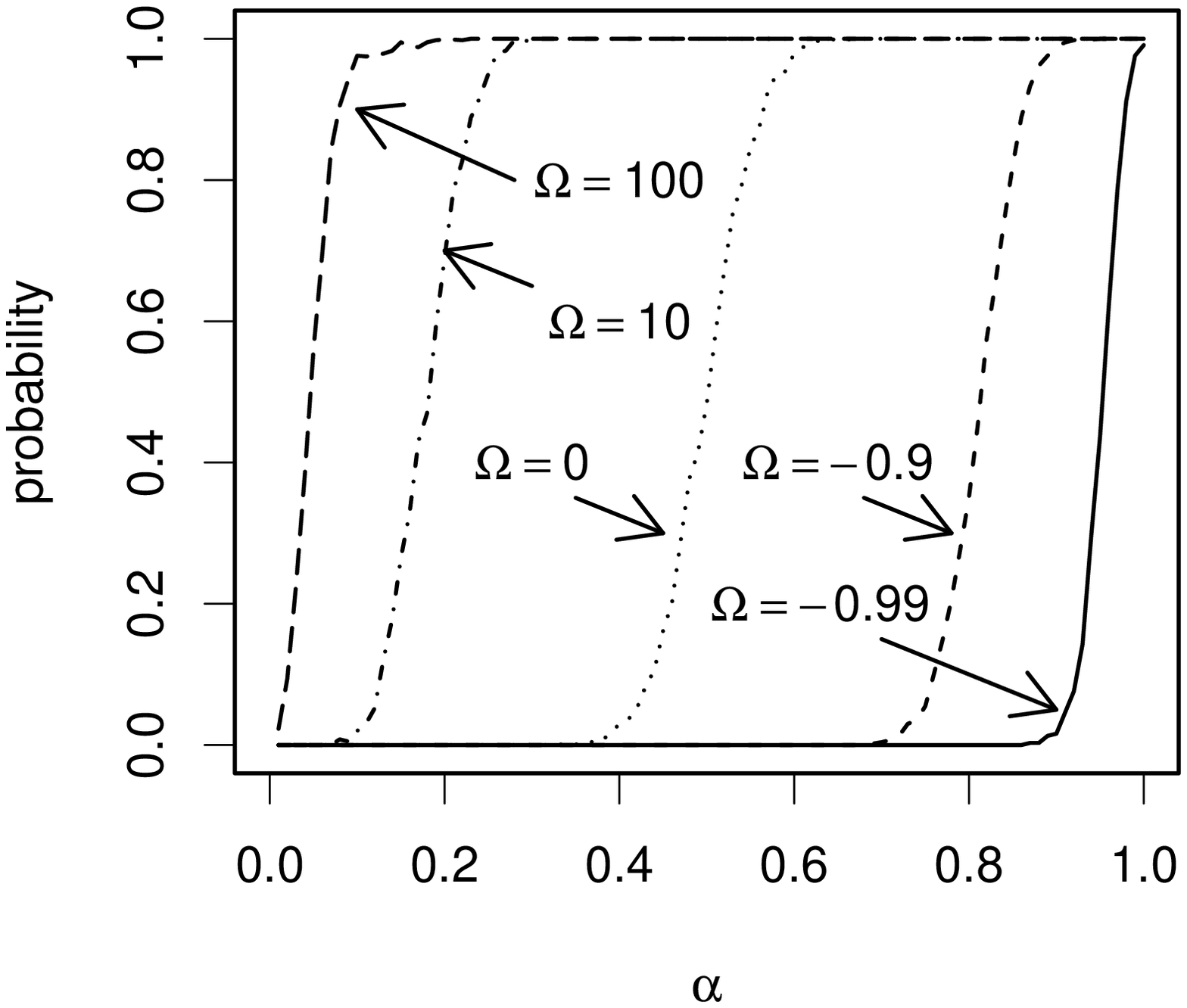}
 \\
 (a) $p=10$.
 & (b) $p=100$.
\end{tabular}
\caption{The relative frequency that Eq.~(\ref{eq:diagonal-scaling}) admits a solution.
The horizontal axis denotes $\alpha\leq 1$.
The samples are drawn from the Gaussian spike covariance model
with the parameter $\Omega\in\{-0.99,-0.9,0,10,100\}$.
The dimension is (a) $p=10$ and (b) $p=100$.
The number of simulation in each setting is $10^3$.
}
\label{figure:ok}
\end{figure}

\begin{figure}[htbp]
 \centering
 \begin{tabular}{cc}
 \includegraphics[width=0.48\textwidth]{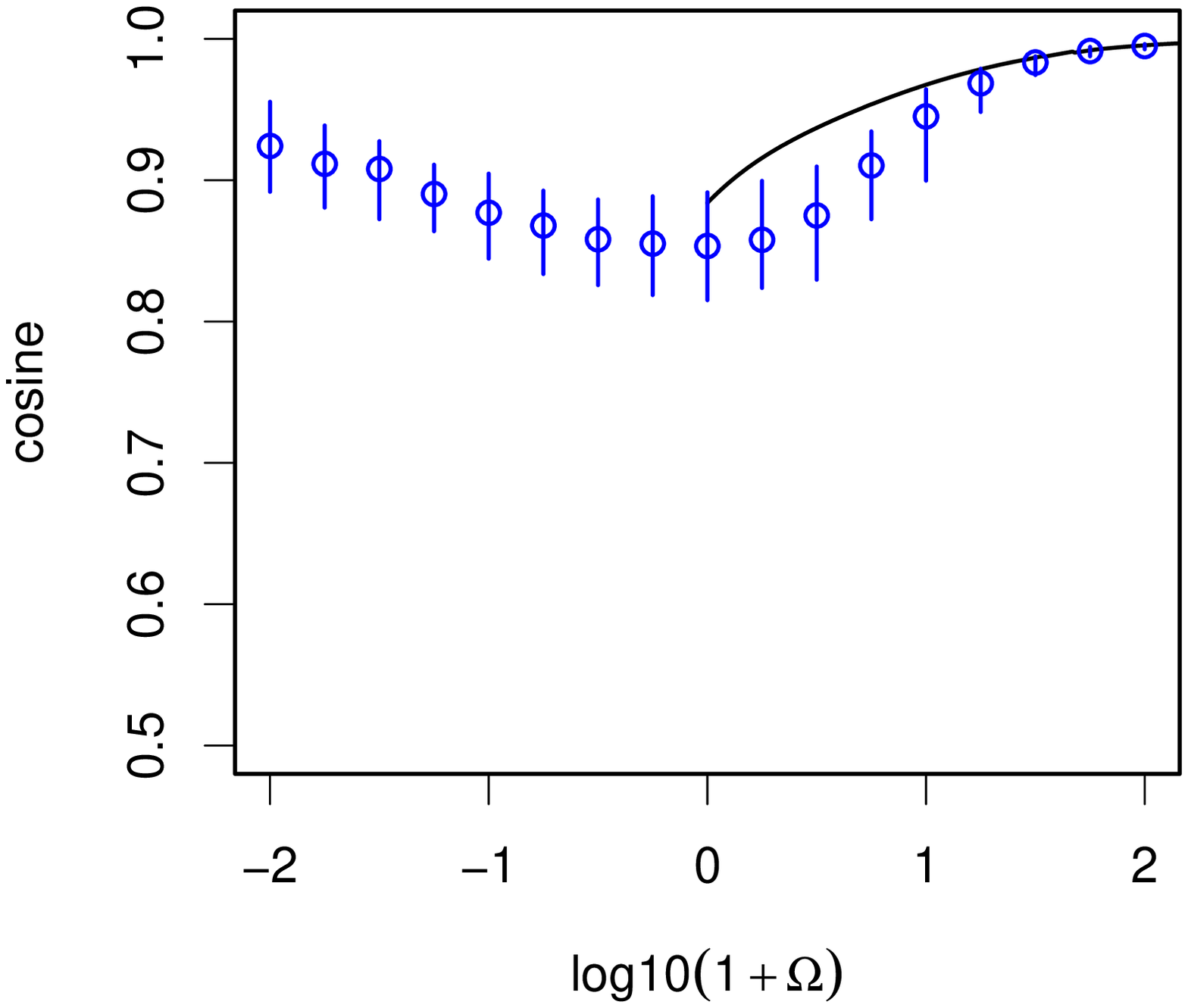}
  &  \includegraphics[width=0.48\textwidth]{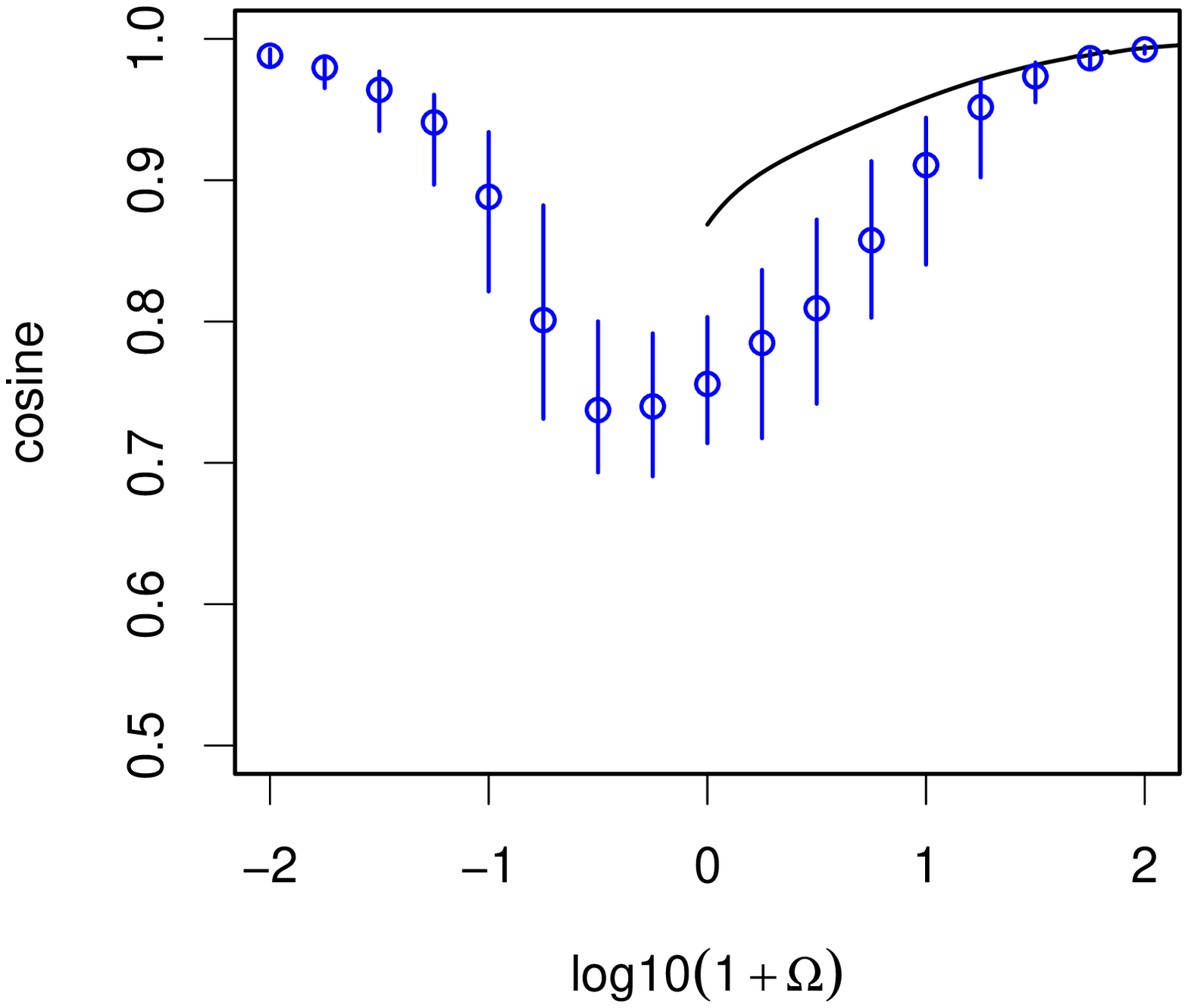}
 \\
 (a) $\alpha=1$.
 & (b) $\alpha=0.7$.
 \end{tabular}
\caption{The cosine similarity
 against $\log_{10}(1+\Omega)$ under the Gaussian spike covariance model with $p=100$,
 where the case $-1<\Omega\leq 0$ is included.
  The median and 90-percent range are based on 100 simulated values.
 The solid line is the theoretical curve for $\Omega\geq 0$.
}
\label{fig:outside}
\end{figure}

\begin{figure}[htbp]
\centering
 \begin{tabular}{cc}
 \includegraphics[width=0.48\textwidth]{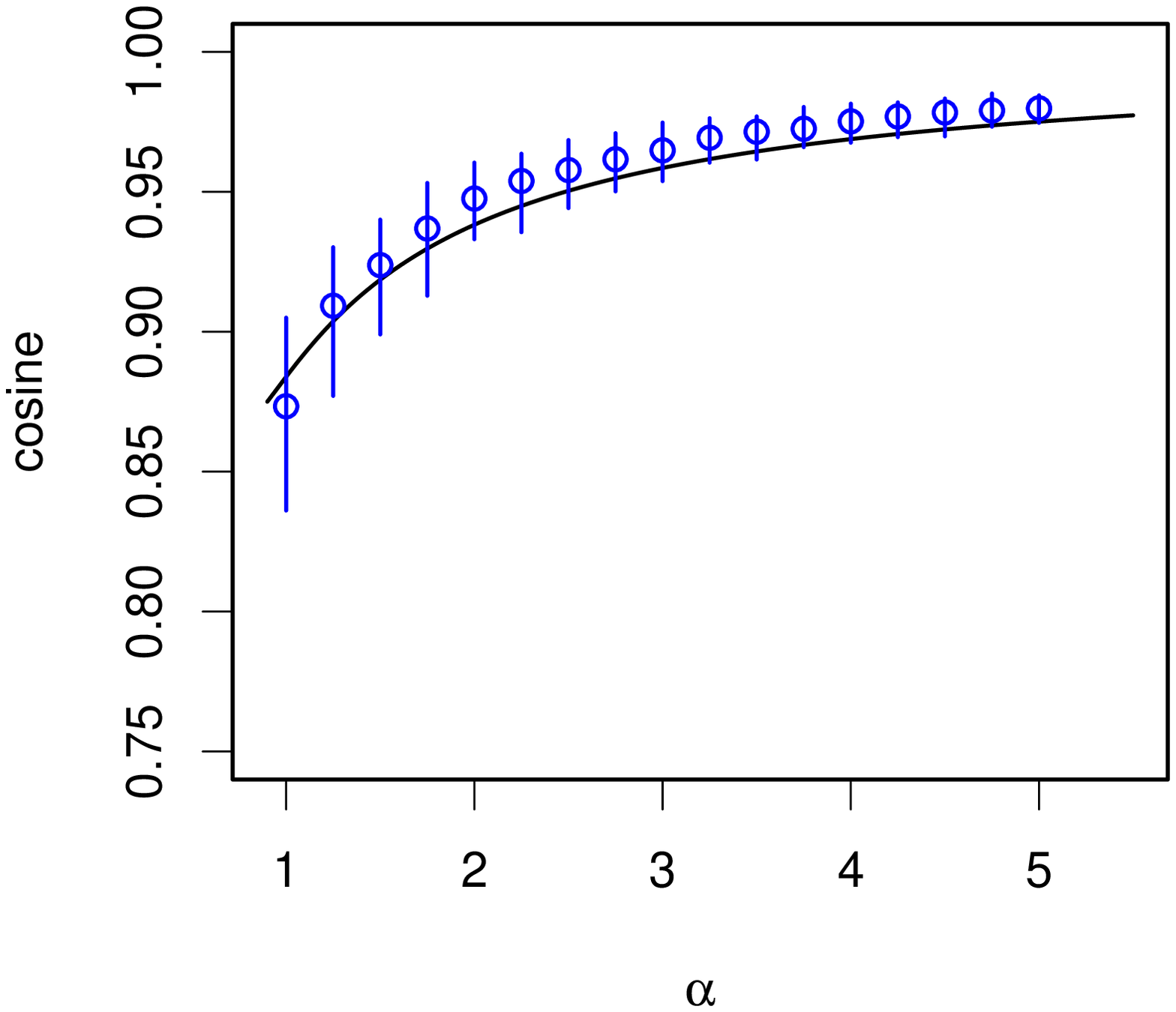}
 &  \includegraphics[width=0.48\textwidth]{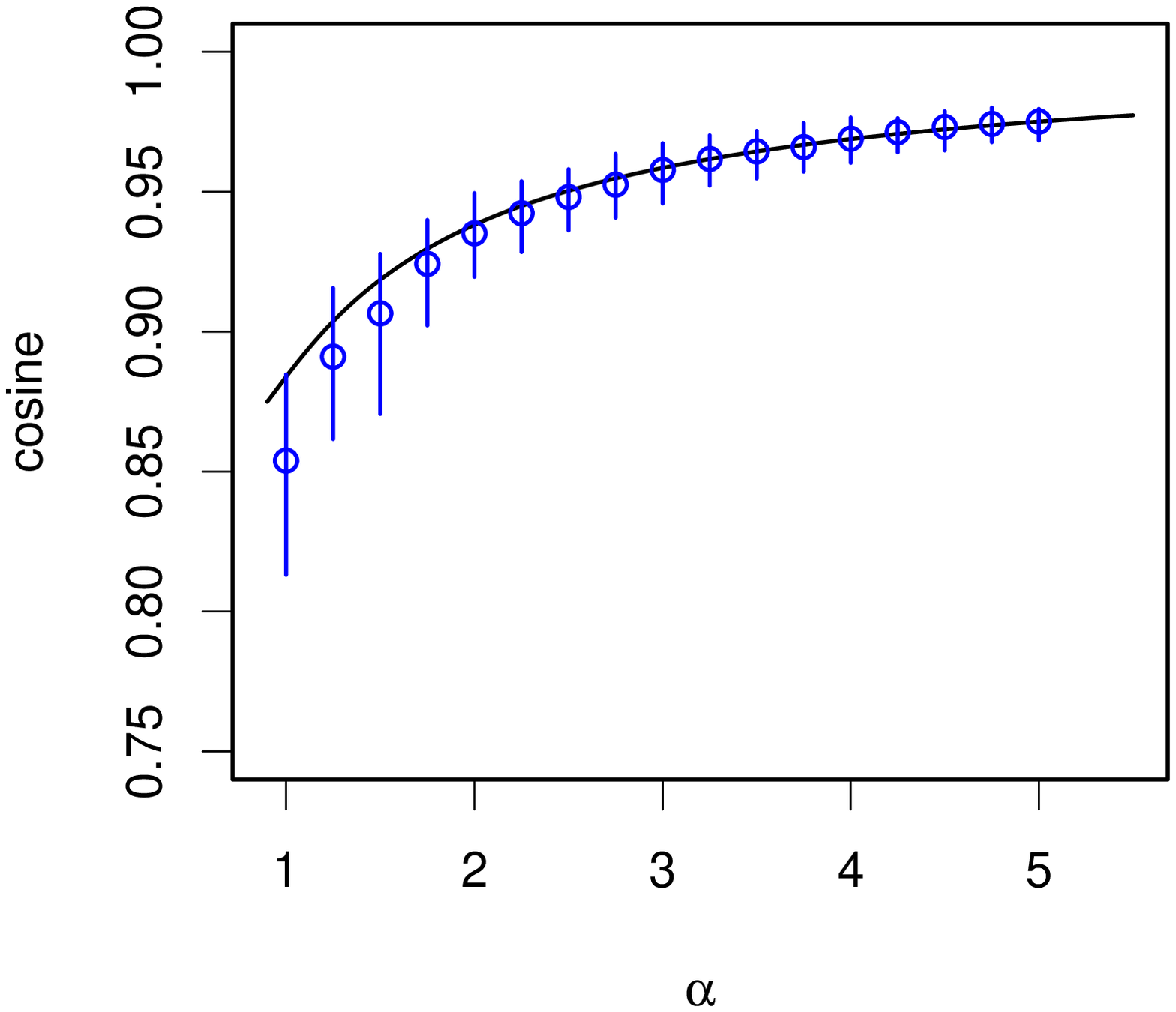}
 \\
 (a) The power-law model.
 & (b) The stepwise model.
 \end{tabular}
\caption{The cosine similarity as a function of $\alpha$
 under the power-law and stepwise models with $p=100$.
  The points and whisker bars indicate the median
 and 90-percent range of 100 simulated values.
 The solid line is the theoretical curve (\ref{eq:result-Omega-zero}) for the identity covariance.
}
\label{fig:other}
\end{figure}

\begin{figure}[htbp]
\centering
 \begin{tabular}{c}
 \includegraphics[width=0.48\textwidth]{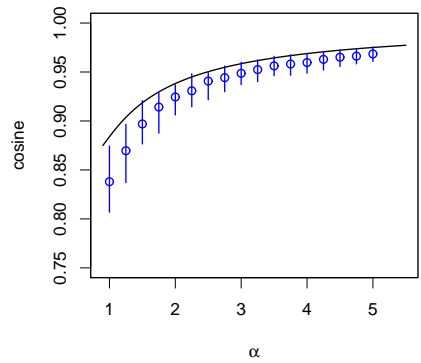}
 \end{tabular}
\caption{The cosine similarity
 as a function of $\alpha$
 when the samples are drawn from the i.i.d.\ standardized $t$-distribution
 with 3 degrees of freedom.
 The dimension is $p=100$.
  The points and whisker bars indicate the median
 and 90-percent range of 100 simulated values.
 The solid line is the theoretical curve (\ref{eq:result-Omega-zero}) for the identity covariance.
}
\label{fig:t-distribution}
\end{figure}

\end{document}